\newcommand{\bel}[1]{\begin{equation}\label{#1}}
\newcommand{\be}{\begin{equation}}
\newcommand{\ba}{\begin{eqnarray}}
\newcommand{\ea}{\end{eqnarray}}
\newcommand{\qe}{\end{equation}}
\newcommand{\Hmm}[1]{\leavevmode{\marginpar{\tiny%
$\hbox to 0mm{\hspace*{-0.5mm}$\leftarrow$\hss}%
\vcenter{\vrule depth 0.1mm height 0.1mm width \the\marginparwidth}%
\hbox to
0mm{\hss$\rightarrow$\hspace*{-0.5mm}}$\\\relax\raggedright #1}}}
\newtheorem{theorem}{Theorem}[section]
\newtheorem{lemma}[theorem]{Lemma}
\newtheorem{corollary}[theorem]{Corollary}
\newtheorem{definition}[theorem]{Definition}
\newtheorem{prop}[theorem]{Proposition}
\begin{document}

\title[Gradient Estimate of subelliptic harmonic maps with potential]{Gradient Estimate of subelliptic harmonic maps with potential}

\author{Han Luo}
\address{Han Luo: School of Mathematical Sciences, Fudan University, Shanghai 200433, P.R. China }
\email{\href{mailto:19110180023@fudan.edu.cn}{19110180023@fudan.edu.cn}}

\begin{abstract}
In this paper, we investigate subelliptic harmonic maps with potential from noncompact complete sub-Riemannian manifolds corresponding to totally geodesic Riemannian foliations. Under some suitable conditions, we give the gradient estimates of these maps and establish a Liouville type result.
\end{abstract}

\maketitle

\section{Introduction}\label{sec:intro}

Subelliptic harmonic maps were first introduced by Jost and Xu in \cite{10.2307/117803}, which can be regarded as a natural counterpart of harmonic maps in the realm of sub-Riemannian geometry.  Barletta et al. \cite{MR1871387} studied pseudo-harmonic maps from pseudoconvex CR manifolds, which are actually subelliptic harmonic maps defined with respect to the Webster metrics. Dong \cite{MR4236537} obtained Eells-Sampson type results for subelliptic harmonic maps in some general cases. On the other hand, to study some important physics phenomena, the harmonic maps with potential were introduced in \cite{MR1433176}. Some existence results for harmonic maps with potential can be found in \cite{MR1800592} and \cite{MR1680678}.

Suppose that $(M, H, g_H)$ is a sub-Riemannian manifold with a smooth measure $d\mu$ and $(N, h)$ is a Riemannian manifold. Given a function $G \in C^{\infty}(N)$, we consider the following energy functional
\begin{equation}\label{eq:energy functional}
E_G(u)=\frac{1}{2}\int_M [\vert \mathrm{d}u_H\vert^2-2G(u)]\,\mathrm{d}\mu
\end{equation}
where $u: M \to N$ is a smooth map and $\mathrm{d}u_H=\mathrm{d}u\vert_H$. A smooth map $u$ : $(M, H, g_H)\to (N, h)$ is referred to as a subelliptic harmonic map with potential $G$ if it is a critical point of (\ref{eq:energy functional}). The subelliptic harmonic maps with potential can be viewed as a generalization of both harmonic maps with potential and subelliptic harmonic maps. Some Eells-Sampson type existence results of such maps have been achieved in \cite{dong2022subelliptic}.

Many Liouville type theorems have been established in both Riemannian geometry and sub-Riemannian geometry. One may often derive Liouville type results directly from gradient estimates. At first, Yau \cite{MR431040} showed that on a complete Riemannian manifold with nonnegative Ricci curvature, there doesn't exist any nontrivial harmonic function bounded from one side. Later, Cheng \cite{MR573431} proved the Liouville theorem for harmonic maps whose images satisfy the sublinear growth condition when the source manifolds have nonnegative Ricci curvature and the target manifolds have nonpositive sectional curvature. Chen \cite{MR1667241} gave Liouville theorems for harmonic maps with potential whose images are contained in a regular ball on a manifold with sectional curvature bounded from above or a horoball on a Cartan-Hadamard manifold. Ren \cite{MR4045310} achieved a Liouville type results for positive pseudo-harmonic functions on complete pseudo-Hermitian manifolds. Chong et al. \cite{MR4167256} established a Liouville theorem for pseudo-harmonic maps which generalizes the one for harmonic maps by Choi \cite{MR647905}. Zou \cite{zou21} got a gradient estimate of the subelliptic harmonic maps when the source manifolds are step-$2$ complete totally geodesic Riemannian foliations. In this paper, we aim to give gradient estimates of subelliptic harmonic maps with potential, and establish Liouville type results for them.

For a Riemannian foliation $(M, g; \mathfrak{F})$ with a bundle-like metric $g$, we define $H = (T\mathfrak{F})^{\perp}$ as the horizontal subbundle of the foliation $\mathfrak{F}$ with respect to $g$ and denote $g_H=g\vert_H$. It is easy to check that if $H$ is bracket generating for $TM$, then we have a sub-Riemannian manifold $(M, H, g_H; g)$ corresponding to $(M, g; \mathfrak{F})$. In this paper, the subbundle $H$ is always required to have the bracket generating property for $TM$. We call $M$ a step-$r$ sub-Riemannian manifold if sections of $H$ together with their Lie brackets up to order $r$ spans $T_x M$ at each point $x$. Henceforth, we always assume that the source manifold $(M^{m+d},H,g_H;g)$ is a complete step-$r$ sub-Riemannian manifold whose sub-Riemannian structure comes from a totally geodesic Riemannian foliation and the target manifold $(N,h)$ is a complete Riemannian manifold.

Suppose the sectional curvature of $(N,h)$ is bounded above by $\bar K$, where $\bar K\ge 0$ is a constant. Let $\bar B_q(\tau)$ denote a geodesic ball of radius $\tau<\pi/2\sqrt{\bar K}$ centered at $q\in N$, which lies inside the cut locus of $q$. Set
\begin{equation*}
\phi(t)=\begin{cases}
(1-\cos\sqrt{\bar K}t)/\bar K,\qquad &\bar K>0 \\
t^2/2,\qquad &\bar K=0
\end{cases}
\end{equation*}
and
\begin{equation*}
\psi(\cdot)=\phi\circ\rho(\cdot)
\end{equation*}
where $\rho(\cdot)=d_N(\cdot,q)$ denotes the Riemannian distance from $q$ in $N$. Let $M_H$ and $m_H$ be constants such that
\begin{equation*}
\text{Hess}\,G(Y,Y)\le M_G\langle Y,Y\rangle_h, \qquad\forall\, Y\in T_y N, y\in \bar B_q(\tau)
\end{equation*}
and
\begin{equation*}
m_G=\underset{y\in\bar B_q(\tau)}{sup} \langle \widetilde\nabla \psi,\widetilde\nabla G\rangle_h(y)
\end{equation*}
where $\widetilde\nabla$ is the Riemannian connection on $(N, h)$, $\text{Hess}\,G$ is the Hessian matrix of $G$ with respect to $\widetilde\nabla$. Choosing a constant $\delta>0$ such that $\tau<\delta<\pi/2\sqrt{\bar K}$, we set
\begin{equation*}
\beta_1=\cos\sqrt{\bar K}\tau-\cos\sqrt{\bar K}\delta,\quad\beta_2=1-\cos\sqrt{\bar K}\delta
\end{equation*}
and
\begin{equation}\label{eq:definition of f}
f=\beta_2-\bar K\psi.
\end{equation}
It is easy to see that
\begin{equation}\label{eq:bound of f}
0<\beta_1<f<\beta_2<1.
\end{equation}
Let $B_p(R)$ denote the Riemannian geodesic ball of radius $R$ centered at $p \in M$, we have the following
\begin{theorem}\label{thm:gradient estimate of step r}
Let $(M^{m+d},H,g_H;g)$ be a noncompact complete sub-Riemannian manifold corresponding to a totally
geodesic Riemannian foliation with
\begin{equation*}
\quad Ric_H \ge- k_1,\quad and \quad \vert T\vert,\,\vert div_H T\vert\le k_2
\end{equation*}
where $Ric_H$ and $T$ are the horizontal Ricci curvature and the torsion of the generalized Bott connection $\nabla^{\mathfrak{B}}$ respectively (see Sect. \ref{sec:preli} for the detailed definitions), $div_H T$ is the horizontal divergence of $T$, $k_1,k_2\ge0$ are constants. Let $N$, $G$ and $\bar B_q(\tau)$ be as above. Suppose $u:M\to\bar B_q(\tau)\subset N$ is a subelliptic harmonic map with potential $G$. If
\begin{equation*}
k_1+M_G+\frac{2k_2^2}{(1-\beta_2)\beta_1}+k_2+\bar K\text{max}\left(\frac{m_G}{\beta_1},\frac{m_G}{\beta_2}\right)< 0,
\end{equation*}
then $u$ must be a constant map and $u(M)\subset\Sigma_G:=\{y\in N|\widetilde\nabla G(y)=0\}$. Otherwise, we have the
following estimate
\begin{equation*}
\underset{B_p(R)}{\text{sup}} e_H(u)\le C\left(k_1+M_G+\frac{4k_2^2}{(1-\beta_2)\beta_1}+k_2+\bar K\text{max}\left(\frac{m_G}{\beta_1},\frac{m_G}{\beta_2}\right)+\frac{1+R}{R^2}\right)
\end{equation*}
where $C(m,k_1,k_2,\bar K,\tau)$ is a positive constant.
\end{theorem}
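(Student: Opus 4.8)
The plan is to adapt the classical Bochner-technique approach to gradient estimates (à la Yau–Cheng–Schoen, and its sub-Riemannian refinement by Zou) to the setting with a potential. The starting point is a Bochner-type formula for the horizontal energy density $e_H(u) = \frac12|\mathrm{d}u_H|^2$ of a subelliptic harmonic map with potential. Because the source carries a totally geodesic Riemannian foliation, one works with the generalized Bott connection $\nabla^{\mathfrak B}$; the sub-Laplacian $\Delta_H$ acting on $e_H(u)$ produces the usual terms — a nonnegative $|\nabla^{\mathfrak B}\mathrm{d}u_H|^2$ piece, a horizontal Ricci term controlled by $-k_1$, a target-curvature term controlled by $\bar K$ acting on the (nonnegative) quantity built from pairs of horizontal derivatives, plus torsion terms of the foliation involving $|T|$ and $|\mathrm{div}_H T|$, hence bounded by $k_2$-dependent expressions — and crucially, two new terms coming from the potential: one of the form $-\mathrm{Hess}\,G(\mathrm{d}u_H,\mathrm{d}u_H)$, bounded below using $M_G$, and one of the form $\langle \mathrm{d}u_H, \widetilde\nabla_{\mathrm{d}u_H}(\widetilde\nabla G)\rangle$ that must be handled. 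I would first derive this formula carefully (or cite the version in \cite{dong2022subelliptic}/\cite{zou21} and add the potential contributions), arriving at a differential inequality of the shape $\Delta_H e_H(u) \ge -C_1 e_H(u) - C_2 e_H(u)^{?} + (\text{good square term})$, where $C_1$ absorbs $k_1, M_G, k_2, k_2^2/((1-\beta_2)\beta_1)$ and the $\bar K m_G/\beta_i$ terms.

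**Next I would** introduce the auxiliary function $f = \beta_2 - \bar K\psi$ with $\psi = \phi\circ\rho\circ u$, where $\rho = d_N(\cdot,q)$. The point of $f$ is the composition formula for $\Delta_H(\psi)$: since $u$ maps into the regular ball $\bar B_q(\tau)$ and $\phi$ is chosen so that $\mathrm{Hess}\,\phi\circ\rho \ge (\text{something})\,h$ in the relevant directions (the standard Hessian comparison for $\phi = (1-\cos\sqrt{\bar K}t)/\bar K$ under $\mathrm{Sec}_N\le\bar K$), one gets $\Delta_H\psi \ge (\text{positive})\cdot e_H(u) - \bar K\psi\cdot(\ldots) + (\text{potential term involving }\langle\widetilde\nabla\psi,\widetilde\nabla G\rangle)$, and the last term is precisely where $m_G$ enters. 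The bound $0<\beta_1<f<\beta_2<1$ from \eqref{eq:bound of f} guarantees $f$ is bounded away from $0$ and $1$ on $\bar B_q(\tau)$. Then I consider the ratio $F = e_H(u)/f^2$ (or $e_H(u)/(f\cdot\text{const})$ — the exact power tuned so the cross-terms cancel), compute $\Delta_H F$ using the quotient rule for the sub-Laplacian, and feed in the two differential inequalities. The design of $f$ makes the dangerous $\bar K$-curvature term and the potential cross-term combine into something controlled by the hypothesis expression $k_1+M_G+\tfrac{2k_2^2}{(1-\beta_2)\beta_1}+k_2+\bar K\max(m_G/\beta_1,m_G/\beta_2)$.

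**Then the localization step:** multiply $F$ by a cutoff $\eta = \eta(r_M(x))$ where $r_M$ is the Riemannian distance to $p$ in $M$, with $\eta\equiv1$ on $B_p(R)$, supported in $B_p(2R)$, and $|\nabla\eta|^2/\eta \le C/R^2$, $|\Delta_H\eta|\le C(1+R)/R^2$ (the $(1+R)/R^2$ shape, rather than $1/R^2$, is forced by the sub-Riemannian/foliation Laplacian comparison — one uses the Laplacian comparison theorem for $\Delta_H r_M$ under $Ric_H\ge-k_1$, $|T|\le k_2$, which contributes a first-order $\sqrt{k_1}\coth$-type term, hence the linear-in-$R$ correction). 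Apply the maximum principle to $\eta F$ at an interior maximum point $x_0$: there $\nabla(\eta F)=0$ and $\Delta_H(\eta F)\le0$, and the resulting algebraic inequality, after using the good square term to dominate the first-order gradient terms via Cauchy–Schwarz (Young's inequality with a small parameter), yields $(\eta F)(x_0) \le C\big(k_1+M_G+\tfrac{4k_2^2}{(1-\beta_2)\beta_1}+k_2+\bar K\max(m_G/\beta_1,m_G/\beta_2)+\tfrac{1+R}{R^2}\big)$. Evaluating on $B_p(R)$ where $\eta\equiv1$ and $f$ is bounded gives the stated sup bound on $e_H(u)$.

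**Finally, the Liouville part:** if the curvature-potential quantity is strictly negative, then for every $R$ the bound reads $\sup_{B_p(R)} e_H(u) \le C\cdot\tfrac{1+R}{R^2}$ (the negative constant only helps), and letting $R\to\infty$ forces $e_H(u)\equiv0$, so $u$ is constant; constancy plus the Euler–Lagrange equation $\tau_H(u) + \widetilde\nabla G(u) = 0$ (which for a constant map reduces to $\widetilde\nabla G(u)=0$) gives $u(M)\subset\Sigma_G$. \textbf{The main obstacle} I anticipate is the Bochner computation itself in the foliated sub-Riemannian setting: correctly tracking the torsion $T$ of the foliation and its horizontal divergence through the commutation of horizontal derivatives — these are exactly the terms that produce the $k_2$ and $k_2^2/((1-\beta_2)\beta_1)$ contributions — and making sure the potential terms $\mathrm{Hess}\,G$ and $\langle\mathrm{d}u_H,\widetilde\nabla_{\mathrm{d}u_H}\widetilde\nabla G\rangle$ are bounded in a way compatible with the $f$-auxiliary-function trick, so that $M_G$ and $m_G$ enter with the right coefficients; the bookkeeping that turns the coefficient $\tfrac{2k_2^2}{(1-\beta_2)\beta_1}$ in the Liouville hypothesis into $\tfrac{4k_2^2}{(1-\beta_2)\beta_1}$ in the estimate (a factor-of-2 loss from a Young's inequality split in the localized argument) is the kind of subtle constant-chasing that is easy to get wrong.
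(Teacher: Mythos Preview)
Your overall architecture matches the paper's: a Bochner-type inequality, the auxiliary function $f=\beta_2-\bar K\psi$, a quotient, a cutoff with the sub-Laplacian comparison $\Delta_H r\le C(1+1/r)$, the maximum principle, and the Liouville limit $R\to\infty$. The one substantive discrepancy is that you commit to running everything on the \emph{horizontal} energy $e_H(u)$ and to the test function $F=e_H(u)/f^2$, whereas the paper uses the \emph{full} energy $e(u)=e_H(u)+e_V(u)$ and $A=e(u)/f^2$. This is not cosmetic in the general step-$r$ case. The sub-Laplacian of $e_H$ produces torsion cross-terms $2u^I_i u^I_{\alpha k}T^\alpha_{ik}$ and $u^I_i u^I_\alpha T^\alpha_{ik,k}$; after Young's inequality these leave residual pieces $(u^I_{\alpha k})^2$ and $e_V(u)$ that cannot be absorbed into $|\nabla^{\mathfrak B}\mathrm{d}u_H|^2$ and $e_H$ alone. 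The paper's fix is precisely to pass to $e(u)$: then the good second-order term is $(u^I_{Ak})^2$ summed over \emph{all} $A$, which swallows $(u^I_{\alpha k})^2$, and $e_V$ sits inside $e$. This yields the closed inequality $\Delta_H e(u)\ge (1-k_2\epsilon)(u^I_{Ak})^2-\bigl(2M_G+2k_1+\tfrac{(2+\epsilon)k_2}{\epsilon}\bigr)e(u)-4\bar K\,e(u)\,e_H(u)$. The final bound is nevertheless on $e_H$, because the $\Delta_H f$ computation contributes a strictly positive term $4\bar K(1-\beta_2+f)e_H/f^3$ that drives a quadratic inequality in $e_H\chi/f^2$ at the maximum point. (Working with $e_H$ and $e_V$ separately \emph{does} succeed in the step-$2$ case with $\mathrm{div}_H T=0$ --- that is exactly Theorem~1.2 --- but it needs the extra positivity from $\eta_{\min}>0$ and a more delicate auxiliary $\Phi_{\mu\chi}=e_H+\mu\chi e_V$.)

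Two minor corrections. Your list of ``new potential terms'' names $-\mathrm{Hess}\,G(\mathrm{d}u_H,\mathrm{d}u_H)$ and $\langle \mathrm{d}u_H,\widetilde\nabla_{\mathrm{d}u_H}\widetilde\nabla G\rangle$ as separate items; these are the same quantity, and it is the only potential contribution to the Bochner side (bounded via $M_G$). The constant $m_G$ enters, as you correctly say later, through the chain rule for $\Delta_H(\psi\circ u)$ using $\tau_H(u)=-\widetilde\nabla G(u)$. And the passage from $2k_2^2/((1-\beta_2)\beta_1)$ in the Liouville hypothesis to $4k_2^2/((1-\beta_2)\beta_1)$ in the estimate is not a Young-inequality loss in the localization step; it comes from the choice of the free parameter $\epsilon\in\bigl(0,\tfrac{(1-\beta_2)\beta_1}{k_2}\bigr)$ in $s_G(\epsilon)=k_1+M_G+\tfrac{(2+\epsilon)k_2}{\epsilon}+\bar K\max(m_G/\beta_1,m_G/\beta_2)$: the Liouville criterion is $s_G<0$ at the right endpoint, while the quantitative estimate is stated with $\epsilon$ at half that value.
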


If $H$ is 2-step bracket generating for $TM$, adding some suitable condition on $T$, we may get a better estimate as follows.

\begin{theorem}\label{thm:gradient estimate of step 2}
Let $(M^{m+d},H,g_H;g)$ be a step-$2$ noncompact complete sub-Riemannian manifold corresponding to a totally
geodesic Riemannian foliation with
\begin{equation*}
\quad Ric_H \ge- k_1,\quad  \vert T\vert\le k_2,\quad and \quad div_H T=0
\end{equation*}
where $k_1,k_2\ge0$ are constants. Let $N$, $G$ and $\bar B_q(\tau)$ be as above. Suppose $u:M\to\bar B_q(\tau)\subset N$ is a subelliptic harmonic map with potential $G$. We have the
following estimate
\begin{equation*}
\underset{B_p(R)}{\text{sup}} e_H(u)\le C(m,k_1,k_2,\bar K,\tau)\left(k_1+M_G+\frac{1+R}{R^2}\right)
\end{equation*}
where $C(m,k_1,k_2,\bar K,\tau)$ is a positive constant.
\end{theorem}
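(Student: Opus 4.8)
The plan is to adapt the Bochner technique used for Theorem \ref{thm:gradient estimate of step r} to the step-$2$ setting, where the hypothesis $\mathrm{div}_H T = 0$ kills the bad torsion terms that force the extra $k_2$-contributions in the general case. First I would set up the auxiliary function $F = f \cdot e_H(u)$, where $e_H(u) = \tfrac{1}{2}|\mathrm{d}u_H|^2$ is the horizontal energy density and $f = \beta_2 - \bar K\psi$ is the positive function defined in \eqref{eq:definition of f} built from the target distance to $q$; the composition bounds $0 < \beta_1 < f < \beta_2 < 1$ from \eqref{eq:bound of f} will be used repeatedly. The core computation is a generalized Bochner formula for the subelliptic sub-Laplacian $\Delta_H$ acting on $e_H(u)$ along a subelliptic harmonic map with potential. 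Relative to the ordinary subelliptic harmonic case this produces three new kinds of terms: one involving $\mathrm{Hess}\,G$ applied to $\mathrm{d}u_H$, which is controlled from above by $M_G\, |\mathrm{d}u_H|^2$; one involving $\langle\widetilde\nabla\psi,\widetilde\nabla G\rangle$ coming from differentiating $f\circ u$, controlled by $m_G$ and the bound on $\psi$; and curvature terms from $N$ controlled using $\mathrm{Sec}_N \le \bar K$ together with the Hessian comparison $\mathrm{Hess}\,\psi \ge (1 - \bar K\psi)\langle\cdot,\cdot\rangle = f\langle\cdot,\cdot\rangle$ on $\bar B_q(\tau)$, which is exactly why $f$ is the right weight.

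Next I would combine the Bochner inequality for $e_H(u)$ with the differential inequality satisfied by $f\circ u$ (using that $u$ is harmonic with potential, so $\Delta_H \psi(u) \ge f\,|\mathrm{d}u_H|^2 - (\text{torsion correction}) - m_G$ type estimate) to derive a lower bound for $\Delta_H F$ of the schematic form $\Delta_H F \ge -C_1 F^2/f^{\,?} - C_2 F$ plus gradient-of-$F$ terms that can be absorbed, where $C_1, C_2$ depend only on $m, k_1, k_2, \bar K, \tau$ through $\mathrm{Ric}_H \ge -k_1$, $|T|\le k_2$, $M_G$, $m_G$, and the $\beta_i$. The key structural point enabling the sharper conclusion is that in step $2$ with $\mathrm{div}_H T = 0$, the first-order torsion terms in the sub-Bochner formula either vanish or telescope, so the coefficient multiplying $e_H(u)$ reduces essentially to $k_1 + M_G$ rather than $k_1 + M_G + k_2 + \tfrac{2k_2^2}{(1-\beta_2)\beta_1} + \cdots$; the $k_2$-dependence survives only inside the constant $C(m,k_1,k_2,\bar K,\tau)$, not as an additive term in the parenthesis.

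Then I would run the standard localized maximum-principle argument: fix $p\in M$, take a cutoff $\eta$ supported in the Riemannian geodesic ball $B_p(2R)$ with $\eta \equiv 1$ on $B_p(R)$ and $|\nabla_H \eta|^2/\eta + |\Delta_H \eta| \le C(1+R)/R^2$ (here one uses completeness and the Laplacian comparison for $\Delta_H$ on totally geodesic Riemannian foliations with $\mathrm{Ric}_H \ge -k_1$, $|T|,|\mathrm{div}_H T|\le k_2$ — in step $2$ with $\mathrm{div}_H T = 0$ this is the cleaner estimate already available from the preliminaries). Apply the maximum principle to $\eta^2 F$ at an interior maximum point $x_0$: at $x_0$ one has $\nabla_H(\eta^2 F) = 0$ and $\Delta_H(\eta^2 F)\le 0$, and expanding gives, after using Cauchy–Schwarz to absorb the cross terms $\langle \nabla_H\eta, \nabla_H F\rangle$, a quadratic inequality in $(\eta^2 F)(x_0)$ of the form $a\,(\eta^2 F)^2 \le (k_1 + M_G)\,(\eta^2 F) + C\,\tfrac{1+R}{R^2}\,(\eta^2 F)$ up to harmless constants, whence $(\eta^2 F)(x_0)\le C(m,k_1,k_2,\bar K,\tau)\big(k_1 + M_G + \tfrac{1+R}{R^2}\big)$; restricting to $B_p(R)$ and using $f > \beta_1$ converts this to the claimed bound on $\sup_{B_p(R)} e_H(u)$.

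The main obstacle I anticipate is the careful bookkeeping of the torsion terms in the generalized Bochner formula for $\Delta_H e_H(u)$ on a totally geodesic Riemannian foliation: one must verify that in the step-$2$ case the condition $\mathrm{div}_H T = 0$ really does eliminate the terms that, in the proof of Theorem \ref{thm:gradient estimate of step r}, are responsible for the additive $k_2 + \tfrac{4k_2^2}{(1-\beta_2)\beta_1}$ in the estimate — this requires integrating by parts against the weight $f$ and tracking how $\nabla^{\mathfrak B} T$ pairs with $\mathrm{d}u_H$, using the totally-geodesic hypothesis to control the vertical components. A secondary technical point is ensuring the cutoff estimates for $\Delta_H \eta$ hold with the stated $(1+R)/R^2$ rate; this follows from the sub-Riemannian distance being comparable to the Riemannian distance on these foliations plus the comparison theorem, but the constants must be tracked to confirm they depend only on $(m, k_1, k_2, \bar K, \tau)$.
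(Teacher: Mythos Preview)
Your proposal has a genuine gap at the Bochner step. You assert that in step~$2$ with $\mathrm{div}_H T=0$ the first-order torsion terms in the Bochner formula for $\Delta_H e_H(u)$ ``either vanish or telescope''. This is not correct: the hypothesis $\mathrm{div}_H T=0$ kills only the term $u^I_i u^I_{\alpha}T^{\alpha}_{ik,k}$, while the cross term $2u^I_i u^I_{\alpha k}T^{\alpha}_{ik}$ survives intact (see the formula for $\Delta_H e_H(u)$ in the proof of Lemma~\ref{thm:subelliptic bochner formula}). Controlling this surviving term by Cauchy--Schwarz inevitably produces a negative $(u^I_{\alpha k})^2$ contribution, which cannot be absorbed into $(u^I_{ik})^2$ and is invisible to any auxiliary function built from $e_H(u)$ alone. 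Your scheme $F=f\cdot e_H(u)$ therefore cannot close, and you would be forced back to exactly the $k_2$-dependent additive terms you are trying to avoid.

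The paper resolves this by working not with $e_H(u)$ but with the coupled quantity $\Phi_{\mu\chi}=e_H(u)+\mu\chi\,e_V(u)$ for a carefully chosen $\mu$, together with the weight $(b-\psi\circ u)^{-\nu}$ coming from Lemma~\ref{thm:estimate of main auxiliary function}. Two step-$2$ inputs are essential and are absent from your plan: first, the Bochner inequality \eqref{eq:step 2 vertical bochner formula} for $e_V(u)$ supplies a positive $(u^I_{\alpha k})^2$ term that, after choosing $\epsilon_2=\epsilon_1\mu\chi/C$, exactly cancels the bad $(u^I_{\alpha k})^2$ from the horizontal formula; second, the step-$2$ bracket-generating condition via Lemma~\ref{thm:eta} gives $(u^I_{ik})^2\ge \eta_{\min}\,e_V(u)$, which furnishes a \emph{positive} $e_V(u)$ term inside $\Delta_H e_H$ and is what ultimately allows the choice of $\mu^{-1}$ in \eqref{eq:setting of mu} to push all $k_2$-dependence into the multiplicative constant rather than the additive bracket. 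Without coupling to $e_V(u)$ and without invoking $\eta_{\min}>0$, the argument cannot reach the stated conclusion.
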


A direct application of Theorem \ref{thm:gradient estimate of step 2} is the following Liouville type result.

\begin{corollary}\label{thm:Liouville Theorem}
Let $(M^{m+d},H,g_H;g)$ be a step-$2$ noncompact complete sub-Riemannian manifold corresponding to a totally
geodesic Riemannian foliation with
\begin{equation*}
\quad Ric_H \ge 0,\quad  \vert T\vert\le k_2,\quad and \quad div_H T=0
\end{equation*}
where $k_2\ge0$ is a constant. Let $N$ and $\bar B_q(\tau)$ be as above, and let $G$ be a smooth function with $\text{Hess}\, G\le 0$ on $N$. If $u:M\to\bar B_q(\tau)\subset N$ is a subelliptic harmonic map with potential $G$, then $u$ must be a constant map and $u(M)\subset\Sigma_G$.
\end{corollary}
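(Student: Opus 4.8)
The plan is to obtain Corollary~\ref{thm:Liouville Theorem} as an immediate consequence of Theorem~\ref{thm:gradient estimate of step 2}, by exhausting $M$ with geodesic balls and letting the radius tend to infinity. First I would check that the present hypotheses are the special case of those in Theorem~\ref{thm:gradient estimate of step 2} corresponding to $k_1=0$ and $M_G=0$: indeed $Ric_H\ge 0$ is $Ric_H\ge -k_1$ with $k_1=0$, and $\text{Hess}\,G\le 0$ on $N$ (hence in particular on $\bar B_q(\tau)$) means the defining inequality $\text{Hess}\,G(Y,Y)\le M_G\langle Y,Y\rangle_h$ holds with $M_G=0$. The remaining hypotheses $|T|\le k_2$, $div_H T=0$, completeness and noncompactness are identical, and the quantity $m_G$ does not appear in the step-$2$ estimate, so no further preparation is needed.

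With $k_1=M_G=0$, Theorem~\ref{thm:gradient estimate of step 2} gives, for every $p\in M$ and every $R>0$,
\[
\sup_{B_p(R)} e_H(u)\ \le\ C(m,0,k_2,\bar K,\tau)\,\frac{1+R}{R^2}.
\]
Since the constant on the right is independent of $R$, while $\frac{1+R}{R^2}\to 0$ as $R\to\infty$, and since a complete noncompact $M$ is exhausted by the balls $B_p(R)$, I would let $R\to\infty$ to conclude $e_H(u)\equiv 0$, i.e. $\mathrm{d}u_H\equiv 0$ on $M$. To upgrade this to ``$u$ is constant'' I would use that $H$ is bracket generating: $\mathrm{d}u_H\equiv 0$ says that $u$ is constant along every horizontal curve, and by the Chow--Rashevskii theorem any two points of the connected manifold $M$ are joined by such a curve, so $u\equiv y_0$ for some $y_0\in\bar B_q(\tau)$.

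It remains to see $y_0\in\Sigma_G$. For this I would invoke the Euler--Lagrange equation for the functional~\eqref{eq:energy functional}, namely $\tau_H(u)+\widetilde\nabla G(u)=0$, where $\tau_H(u)$ denotes the subelliptic tension field; equivalently, this is what the first variation of~\eqref{eq:energy functional} yields at a critical point. For the constant map $u\equiv y_0$ the subelliptic tension field vanishes, so $\widetilde\nabla G(y_0)=0$, and therefore $u(M)=\{y_0\}\subset\Sigma_G$. There is no genuinely hard step in this argument — it is essentially a limiting procedure applied to Theorem~\ref{thm:gradient estimate of step 2} — the only points deserving attention being the admissibility of the choice $M_G=0$ and, above all, the $R$-independence of the constant in the gradient estimate, which is precisely what makes letting $R\to\infty$ informative.
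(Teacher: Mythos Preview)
Your proposal is correct and is exactly the argument the paper has in mind: the corollary is stated there as ``a direct application of Theorem~\ref{thm:gradient estimate of step 2}'' with no further proof, and your specialization $k_1=M_G=0$, followed by letting $R\to\infty$ in the estimate of Theorem~\ref{thm:gradient estimate of step 2}, is precisely that application. The additional details you supply --- using Chow--Rashevskii (equivalently, that $\mathrm{d}u_H\equiv 0$ and bracket generation force $\mathrm{d}u\equiv 0$) to pass from $e_H(u)\equiv 0$ to constancy, and then reading off $\widetilde\nabla G(y_0)=0$ from the Euler--Lagrange equation~\eqref{eq:tension field} --- are the natural ones and match how the paper handles the analogous conclusion in the proof of Theorem~\ref{thm:gradient estimate of step r}.
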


Note that Theorem \ref{thm:gradient estimate of step 2} and Corollary \ref{thm:Liouville Theorem} hold true in the case that the source manifolds are Sasakian manifolds. Since Sasakian manifolds can be seen as  step-$2$ sub-Riemannian manifolds and the Reeb foliation on any Sasakian manifold is actually totally geodesic with the bundle-like metric and of Yang-Mills type, that is, $div_H T=0$ (cf.\cite{MR3587668}).   

\section{Preliminaries}\label{sec:preli}

A sub-Riemannian manifold is defined as a triple $(M, H, g_H)$, where $M$ is a connected smooth manifold, $H$ is a subbundle bracket generating for $TM$, and $g_H$ is a smooth fiberwise metric on $H$. According to \cite{MR862049}, there always exists a Riemannian metric $g$ on $M$ such that $g\vert_H=g_H$, where $g$ is referred to as a Riemannian extension of $g_H$. From now on, we always fix a Riemannian extension $g$ on the sub-Riemannian manifold $(M,H,g_H)$, and consider the quadruple $(M,H,g_H,g)$. According to $g$, the tangent bundle $TM$ has the following orthogonal decomposition:
\begin{equation}\label{eq:orthogonal decomposition}
TM=H\oplus V.
\end{equation}
which induces the projections $\pi_H : TM \to H$ and $\pi_V : TM \to V$. Such $H$ and $V$ are
called the horizontal distribution and the vertical distribution, respectively.

On a sub-Riemannian manifold $(M^{m+d},H,g_H;g)$, there are two canonical distances.
One is the Carnot-Carath\'eodory distance $d_{CC}$ of the sub-Riemannian structure $(H, g_H)$ (cf.\cite{MR862049}), and the other is the Riemannian distance $d_{Rm}$ of $g$. It was proved in \cite{MR793239} that $d_{CC}$ and $d_{Rm}$ induce the same topology, if $H$ is bracket generating for $TM$. Note that the Riemannian distance has better regularity and its variational theory is well studied in Riemannian geometry. In this paper, we restrict our discussion to the Riemannian distance $d_{Rm}$ on $M$ and $B_p(R)$ which is the Riemannian geodesic ball of radius $R$ centered at $p \in M$.

The generalized Bott connection on sub-Riemannian manifold is given by (cf.\cite{MR3587668},\cite{baudoin2015logsobolev},\cite{MR4236537})
\begin{equation}\label{eq:bott connection}
\nabla^{\mathfrak B}_X Y=\begin{cases}
\pi_H(\nabla^R_X Y),\qquad &X,Y\in\Gamma(H) \\
\pi_H([X,Y]),\qquad &X\in\Gamma(V),Y\in\Gamma(H) \\
\pi_V([X,Y]),\qquad &X\in\Gamma(H),Y\in\Gamma(V) \\
\pi_V(\nabla^R_X Y),\qquad &X,Y\in\Gamma(V)
\end{cases}
\end{equation}
where $\nabla^R$ denotes the Riemannian connection of $g$. It is convenient for computations on sub-Riemannian manifold by using the above connection, since $\nabla^{\mathfrak B}$ preserves the decomposition (\ref{eq:orthogonal decomposition}). However, in general, $\nabla^{\mathfrak{B}}$ does not preserve the Riemannian metric $g$. The torsion and the the curvature of $\nabla^{\mathfrak{B}}$ can be expressed as (c.f.\cite{MR4236537})
\begin{equation*}
\begin{split}
T(X,Y)&=\nabla^{\mathfrak{B}}_X Y-\nabla^{\mathfrak{B}}_Y X-[X,Y]\\
&=-\pi_V([\pi_H(X),\pi_H(Y)])-\pi_H([\pi_V(X),\pi_V(Y)])
\end{split}
\end{equation*}
and
\begin{equation*}
R(X, Y )Z = \nabla^{\mathfrak{B}}_X\nabla^{\mathfrak{B}}_Y Z-\nabla^{\mathfrak{B}}_Y \nabla^{\mathfrak{B}}_X Z-\nabla^{\mathfrak{B}}_{[X,Y]}Z.
\end{equation*}
respectively, for $X, Y \in \Gamma(T M)$.
Choose a local orthonormal frame field $\{e_A\}^{m+d}_{A=1}$ on an open domain $\Omega$ of $(M,g)$ such that $span\{e_i\}^{m}_{i=1}=H$, and thus $span\{e_\alpha \}^{m+d}_{\alpha=m+1}=V$. We call such a frame field an adapted frame field for $(M,H,g_H;g)$. Denote its dual frame field by ${\lbrace\omega^A\rbrace}_{A=1}^{m+d}$. From now on, we will make use of the following convention on the ranges of induces in $M$:
\begin{equation*}
\begin{split}
1\le A,B,C,\ldots,&\le m+d;\quad 1\le i,j,k,\ldots,\le m;\\
m+1&\le\alpha,\beta,\gamma,\ldots,\le m+d,
\end{split}
\end{equation*}
and the Einstein summation convention.
Using the frame field, the torsion components and the curvature components are given
by
\begin{equation*}
T^A_{BC}=\langle T(e_B,e_C),e_A\rangle
\end{equation*}
and
\begin{equation*}
R^A_{BCD}=\langle R(e_C, e_D)e_B, e_A\rangle
\end{equation*}
respectively.
Consequently, the horizontal divergence of $T$ is
\begin{equation*}
div_H T (X) = \text{trace}_H (\nabla^{\mathfrak{B}}_{e_i} T)(X,e_i)\quad \text{for} \quad X \in \Gamma(TM),
\end{equation*}
and horizontal Ricci tensor is
\begin{equation*}
Ric_H (X) = \sum\limits^{m}_{i=1} R(X, e_i)e_i \quad \text{for} \quad X \in \Gamma(TM).
\end{equation*}
The horizontal gradient of a smooth function $f$ is defined by
\begin{equation*}
\nabla^H f = \pi_H (\nabla^{\mathfrak{B}}f).
\end{equation*}
As we know, the divergence of a vector field $X$ on $M$ is given by
\begin{equation*}
div_g X=\sum\limits^{m+d}_{A=1}\lbrace e_A\langle X,e_A\rangle - \langle X,\nabla^R_{e_A}e_A\rangle\rbrace.
\end{equation*}
Then one can define sub-Laplacian of a function $f$ on $(M,H,g_H;g)$ as
\begin{equation}\label{eq:sub laplacian definition}
\Delta_H f=div_g(\nabla^H f)= \text{trace}(\nabla^{\mathfrak{B}}\mathrm{d}f\vert_{H\times H})-\zeta f
\end{equation}
where $\zeta=\pi_H(\sum\limits_{\alpha}\nabla^R_{e_{\alpha}}e_{\alpha})$ is referred to as the mean curvature vector field of the vertical distribution $V$.

We call $M$ a step-$r$ sub-Riemannian manifold if sections of $H$ together with their Lie brackets up to order $r$ spans $T_x M$ at each point $x$. Let $S(V)=\{v\in V :\Vert v \Vert_g=1\}$ denote the unit sphere bundle of the vertical bundle $V$. For any $v\in S(V)$, the $v$-component of $T(\cdot,\cdot)$ is given by $T^v(\cdot,\cdot)=\langle T(\cdot,\cdot),v\rangle$. Then we have a smooth function $\eta(v)=\frac 1 2\Vert T^v\Vert^2_g:S(V)\to \mathbb{R}$. If $(M^{m+d},H,g_H;g)$ is a step-2 sub-Riemannian manifold, we have the following
\begin{lemma}(\cite[Lemma 6.6]{MR4236537})\label{thm:eta}
H is $2$-step bracket generating if and only if $\eta(v)>0$ for each $v\in S(V)$.
\end{lemma}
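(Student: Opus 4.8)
The plan is to unravel the definitions and reduce the claim to a pointwise linear-algebra statement about the vertical space $V_x$. Fix $x\in M$. For $X,Y\in\Gamma(H)$ the torsion formula gives $T(X,Y)=-\pi_V([X,Y])$, and this expression is tensorial: replacing $X$ by $fX$ sends $\pi_V([X,Y])$ to $\pi_V([fX,Y])=f\,\pi_V([X,Y])-(Yf)\,\pi_V(X)=f\,\pi_V([X,Y])$, since $\pi_V(X)=0$ for $X\in\Gamma(H)$. Hence the subspace
\[
\mathcal{H}_2(x):=\mathrm{span}\{\pi_V([X,Y])(x):X,Y\in\Gamma(H)\}\subseteq V_x
\]
is well defined and, in terms of an adapted frame, equals $\mathrm{span}\{T(e_i,e_j)(x):1\le i,j\le m\}$. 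Writing $[X,Y]=\pi_H([X,Y])+\pi_V([X,Y])$ with $\pi_H([X,Y])(x)\in H_x$ and using $T_xM=H_x\oplus V_x$, the condition that $H$ be $2$-step bracket generating at $x$ --- namely $H_x+\{[X,Y](x):X,Y\in\Gamma(H)\}=T_xM$ --- is seen to be equivalent to $\mathcal{H}_2(x)=V_x$.

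Next I would compute $\eta$ explicitly. Since the mixed torsion $T(X,Y)$ with $X\in\Gamma(H)$, $Y\in\Gamma(V)$ vanishes, and $T(X,Y)$ with $X,Y\in\Gamma(V)$ lies in $H$, for any $v\in S(V_x)$ only the $H\times H$ block of $T^v=\langle T(\cdot,\cdot),v\rangle$ contributes, so
\[
\eta(v)=\tfrac12\|T^v\|_g^2=\tfrac12\sum_{i,j}\langle T(e_i,e_j),v\rangle^2\ \ge\ 0,
\]
with equality if and only if $\langle T(e_i,e_j),v\rangle=0$ for all $i,j$, i.e. if and only if $v$ is orthogonal to $\mathcal{H}_2(x)$.

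It remains to combine the two observations with the elementary fact that a subspace $W\subseteq V_x$ equals $V_x$ if and only if $W^{\perp}=\{0\}$, i.e. if and only if no unit vector $v\in S(V_x)$ is orthogonal to $W$. Taking $W=\mathcal{H}_2(x)$ we conclude: $H$ is $2$-step bracket generating at $x$ $\iff$ $\mathcal{H}_2(x)=V_x$ $\iff$ $\eta(v)>0$ for every $v\in S(V_x)$. Since $x\in M$ was arbitrary, this proves the lemma. The only point that requires a little care is the tensoriality of $\pi_V([X,Y])$ on pairs of horizontal sections, which is precisely what allows one to pass from the module $[\Gamma(H),\Gamma(H)]$ to pointwise data in $V_x$; beyond that, everything is bookkeeping and I do not anticipate a genuine obstacle.
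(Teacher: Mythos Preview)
Your proof is correct. The paper does not actually prove this lemma: it merely cites it as \cite[Lemma 6.6]{MR4236537} and immediately uses its consequence that $\eta$ attains a positive minimum on $S(V)$. Your argument---checking tensoriality of $\pi_V([X,Y])$ on horizontal pairs, identifying the $2$-step condition at $x$ with $\mathcal{H}_2(x)=V_x$, showing $\eta(v)=0$ exactly when $v\perp\mathcal{H}_2(x)$, and then invoking $W=V_x\iff W^{\perp}=\{0\}$---is the natural one and is presumably what appears in the cited reference; there are no gaps.
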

The above lemma suggests that $\eta(v)$ achieves a positive minimal value $\eta_{min}$ on $S(V)$.

For a Riemannian foliation $(M, g; \mathfrak{F})$, let $H$ be the orthogonal component of $V = T\mathfrak{F}$ and $g_H$ be the restriction of $g$ to $H$. Then we have a sub-Riemannian manifold $(M, H, g_H; g)$ corresponding to $(M, g; \mathfrak{F})$. From \cite[Lemma 1.4.1]{MR2500106}, one may find that $R^j_{i\alpha k}=0$ and $Ric_H$ is symmetric. Furthermore, if $\mathfrak{F}$ is totally geodesic, then the vector field $\zeta = 0$ and $\nabla g$ = 0. Readers may refer to
\cite{MR3587668}, \cite{MR4236537} for details.

We also need the estimate of sub-Laplacian of Riemannian distance on $M$. Although sub-Laplacian comparison theorems have been investigated for some special cases in \cite{MR3299291}
\cite{baudoin2018sublaplacian}\cite{MR3935276}\cite{MR3853931}\cite{MR4167256}, there is no satisfactory comparison theorem for a sub-Riemannian manifold in general up to now. Fortunately, when $M$ is a complete totally geodesic Riemannian foliation, according to \cite[Theorem 3.1]{HY22}, one can get the following result:
\begin{equation}\label{eq:sub-Laplacian of Riemannian distance}
\Delta_H r\le C (1+\frac{1}{r}),\qquad on \quad B_p(R)\backslash Cut(p)
\end{equation}
where $C(m,k_1,k_2)$ is a positive constant, $r$ is the Riemannian distance from the fixed point $p$ and $Cut(p)$ is the cut locus of $p$.

Let $(N,h)$ be a Riemannian manifold with the Riemannian connection $\widetilde\nabla$ and
the Riemannian curvature $\widetilde R$. We choose an orthonormal frame field $\lbrace\widetilde e_I\rbrace_{I=1,\ldots,n}$ in $(N,h)$ and make use of the following convention on the ranges of indices in $N$:
\begin{equation*}
I,J,K=1,\ldots,n.
\end{equation*}

For a smooth map $u:M\to N$, in terms of the frame fields in $M$ and $N$, the differential $\mathrm{d}u$ and the second fundamental form $\beta$ can be written as
\begin{equation*}
\mathrm{d}u=u_A^I\omega^A\otimes\widetilde e_I,
\end{equation*}
and
\begin{equation*}
\beta=u^I_{AB}\omega^A\otimes\omega^B\otimes\widetilde e_I
\end{equation*}
respectively.
Apart from the differential $\mathrm{d}u$, we also introduce two partial differentials $ \mathrm{d}u_H= \mathrm{d}u\vert_H \in\Gamma(H^*\otimes u^{-1}TN)$ and $\mathrm{d}u_V= \mathrm{d}u\vert_V \in\Gamma(V^*\otimes u^{-1}TN)$. Then, we get
\begin{equation*}
\vert\mathrm{d}u_H\vert^2=(u^I_i)^2,\quad \vert \mathrm{d}u_V\vert^2=(u^I_{\alpha})^2,\quad \vert\mathrm{d}u\vert^2=(u^I_A)^2.
\end{equation*}
Define
\begin{equation*}
e_H(u)=\frac 1 2\vert\mathrm{d}u_H\vert^2,\quad e_V(u)=\frac 1 2\vert  \mathrm{d}u_V\vert^2,\quad e(u)=\frac 1 2\vert\mathrm{d}u\vert^2.
\end{equation*}
For any potential function $G\in C^{\infty}(N)$, we introduce the following energy:
\begin{equation}\label{eq:energy functional 2}
E_G(u)=\int_M [e_H(u)-G(u)]\,\mathrm{d}v_g.
\end{equation}
The energy $E_G(u)$ is called horizontal energy with potential $G$.
\begin{definition}(\cite{dong2022subelliptic})
A map $u:(M,H,g_H;g)\to (N,h)$ is called a subelliptic harmonic map with potential $G$ if it is a critical point of the energy $E_G(u)$.
\end{definition}
The Euler-Lagrange equation of (\ref{eq:energy functional 2}) is
\begin{equation}\label{eq:tension field}
\tau_G(u)=\tau_H(u)+(\widetilde\nabla G)(u)=\beta(e_i,e_i)-\mathrm{d}u(\zeta)+(\widetilde\nabla G)(u)=0
\end{equation}
where $\tau_H(u)$ is the subelliptic tension field associated with the horizontal energy (c.f.\cite{MR4236537}).
Therefore, we have the following equivalent characterization of subelliptic harmonic maps with potential $G$.
\begin{prop}(\cite{dong2022subelliptic})
A map $u:(M,H,g_H;G)\to (N,h)$ is a subelliptic harmonic map with potential $G$ if and only if it satisfies the Euler-Lagrange equation
\begin{equation*}
\tau_G(u)=0.
\end{equation*}
We call $\tau_G(u)$ the subelliptic tension field of $u$ with potential $G$.
\end{prop}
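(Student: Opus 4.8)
The plan is to obtain the stated equivalence as the Euler--Lagrange equation of the horizontal energy with potential $E_G$ from \eqref{eq:energy functional 2}, by a standard first-variation computation. Let $u_t:M\to N$, $t\in(-\varepsilon,\varepsilon)$, be a smooth variation of $u=u_0$ whose variation field $V=\frac{\partial u_t}{\partial t}\big|_{t=0}\in\Gamma(u^{-1}TN)$ is supported in a fixed compact set $\Omega\subset M$; criticality of $u$ is understood with respect to such compactly supported variations, since $E_G$ need not be finite over all of $M$. Writing $U(x,t)=u_t(x)$ and pulling back the Levi--Civita connection $\widetilde\nabla$ of $(N,h)$, the torsion-freeness of $\widetilde\nabla$ gives the commutation formula $\widetilde\nabla_{\partial_t}\big(\mathrm{d}u_t(e_i)\big)=\widetilde\nabla_{e_i}\big(\mathrm{d}U(\partial_t)\big)$, which at $t=0$ reads $\widetilde\nabla_{\partial_t}\big(\mathrm{d}u_t(e_i)\big)\big|_{t=0}=\widetilde\nabla_{e_i}V$. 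Differentiating $E_G(u_t)$ under the integral sign, and using $\frac{d}{dt}\big|_{t=0}e_H(u_t)=\sum_i\langle\widetilde\nabla_{e_i}V,\mathrm{d}u(e_i)\rangle_h$ together with $\frac{d}{dt}\big|_{t=0}G(u_t)=\langle\widetilde\nabla G(u),V\rangle_h$, one obtains
\[
\frac{d}{dt}\Big|_{t=0}E_G(u_t)=\int_M\Big(\sum_i\langle\widetilde\nabla_{e_i}V,\mathrm{d}u(e_i)\rangle_h-\langle\widetilde\nabla G(u),V\rangle_h\Big)\,\mathrm{d}v_g.
\]

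The main step is to integrate the first term by parts on the sub-Riemannian manifold. To this end I would introduce the horizontal vector field $W\in\Gamma(H)$ characterized by $\langle W,X\rangle=\langle V,\mathrm{d}u(\pi_H X)\rangle_h$ for all $X\in TM$, that is $W=\sum_i\langle V,\mathrm{d}u(e_i)\rangle_h\,e_i$, and compute its divergence from $div_g W=\sum_A\{e_A\langle W,e_A\rangle-\langle W,\nabla^R_{e_A}e_A\rangle\}$. Using $e_i\langle V,\mathrm{d}u(e_i)\rangle_h=\langle\widetilde\nabla_{e_i}V,\mathrm{d}u(e_i)\rangle_h+\langle V,\widetilde\nabla_{e_i}(\mathrm{d}u(e_i))\rangle_h$, the facts that $\langle W,e_\alpha\rangle=0$ and $\langle W,Y\rangle=\langle V,\mathrm{d}u(\pi_H Y)\rangle_h$, the identity $\beta(e_i,e_i)=\widetilde\nabla_{e_i}(\mathrm{d}u(e_i))-\mathrm{d}u(\nabla^{\mathfrak{B}}_{e_i}e_i)$ with $\nabla^{\mathfrak{B}}_{e_i}e_i=\pi_H(\nabla^R_{e_i}e_i)$, and the definition $\zeta=\pi_H(\sum_\alpha\nabla^R_{e_\alpha}e_\alpha)$, the vertical contributions collapse exactly into the mean-curvature term $\mathrm{d}u(\zeta)$ and one reaches the pointwise identity
\[
div_g W=\sum_i\langle\widetilde\nabla_{e_i}V,\mathrm{d}u(e_i)\rangle_h+\langle V,\tau_H(u)\rangle_h,
\]
with $\tau_H(u)=\beta(e_i,e_i)-\mathrm{d}u(\zeta)$ as in \eqref{eq:tension field}. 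Since $W$ is compactly supported, $\int_M div_g W\,\mathrm{d}v_g=0$ by the divergence theorem, so the first-variation formula becomes
\[
\frac{d}{dt}\Big|_{t=0}E_G(u_t)=-\int_M\big\langle V,\ \tau_H(u)+(\widetilde\nabla G)(u)\big\rangle_h\,\mathrm{d}v_g=-\int_M\langle V,\tau_G(u)\rangle_h\,\mathrm{d}v_g.
\]

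It then follows that $u$ is a critical point of $E_G$ if and only if $\int_M\langle V,\tau_G(u)\rangle_h\,\mathrm{d}v_g=0$ for every compactly supported $V\in\Gamma(u^{-1}TN)$, and the fundamental lemma of the calculus of variations turns this into $\tau_G(u)\equiv0$ on $M$; the reverse implication is immediate. I expect the only points needing care to be the torsion-free commutation formula for the pulled-back connection and the bookkeeping in the divergence computation --- in particular verifying that the vertical pieces coming from $\nabla^R_{e_i}e_i$ and from $\sum_\alpha\nabla^R_{e_\alpha}e_\alpha$ recombine precisely into $\mathrm{d}u(\zeta)$, which is exactly where the second fundamental form $\beta$ being taken with respect to the generalized Bott connection $\nabla^{\mathfrak{B}}$ enters. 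There is no genuine analytic obstacle, as the argument reduces to a pointwise algebraic identity together with the divergence theorem on compact domains.
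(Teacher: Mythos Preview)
Your argument is correct and is exactly the standard first-variation computation one expects: the commutation identity from the torsion-freeness of $\widetilde\nabla$, the horizontal vector field $W$, and the divergence computation all check out, and the bookkeeping showing that the vertical parts of $\nabla^R_{e_i}e_i$ and $\sum_\alpha\nabla^R_{e_\alpha}e_\alpha$ recombine into $\mathrm{d}u(\nabla^{\mathfrak{B}}_{e_i}e_i)+\mathrm{d}u(\zeta)$ is done correctly.

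There is, however, nothing to compare against: the paper does not give its own proof of this proposition. It simply records the Euler--Lagrange equation \eqref{eq:tension field} as a fact and attributes the proposition to \cite{dong2022subelliptic}. So your proposal supplies a proof where the paper merely cites one; your derivation is the expected one and would be what one finds in the cited reference.
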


For our purpose, we need the following Bochner type inequality for $e(u)$.
\begin{lemma}\label{thm:subelliptic bochner formula}
Let $(M^{m+d},H,g_H;g)$ be a complete sub-Riemannian manifold which is corresponding to a totally geodesic Riemannian foliation with
\begin{equation*}
Ric_H \ge- k_1,\quad and \quad \vert T\vert,\,\vert div_H T\vert\le k_2.
\end{equation*}
Suppose $(N,h)$ is a complete Riemannian manifold with $K_N\le \bar K$ and $u:M\to N$ is a subelliptic harmonic map with potential $G$, then one has
\begin{equation}\label{eq:step r bochner formula}
\Delta_H e(u)\ge (1-k_2\epsilon)(u^I_{Ak})^2-\left(2M_G+2k_1+\frac{(2+\epsilon)k_2}{\epsilon}\right)e(u)-4\bar K e(u)\cdot e_H(u)
\end{equation}
for any given $\epsilon>0$.
In particular, if H is 2-step bracket generating for $TM$ and $div_H T=0$, we have
\begin{equation}\label{eq:step 2 horizontal bochner formula}
\begin{split}
\Delta_H e_H\ge
&(1-\epsilon_1)(u^I_{ik})^2-\left(2k_1+\frac{C}{\epsilon_2}+2M_G\right)e_H(u)\\
&+\frac 1 2\epsilon_1\eta_{min} e_V(u)-C\epsilon_2 (u^I_{\alpha k})^2-4\bar K e^2_H(u)
\end{split}
\end{equation}
and
\begin{equation}\label{eq:step 2 vertical bochner formula}
\Delta_H e_V\ge
(u^I_{\alpha k})^2-2M_G e_V(u)-4\bar K e_H(u)\cdot e_V(u)
\end{equation}
for any given $0<\epsilon_1<1$ and $\epsilon_2>0$, where $C(k_2)$ is a positive constant only depending on $T$.
\end{lemma}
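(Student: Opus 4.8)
The plan is to run the Bochner technique with respect to the generalized Bott connection $\nabla^{\mathfrak B}$, using in an essential way the three structural facts that a totally geodesic Riemannian foliation supplies: the mean curvature vector $\zeta$ vanishes, $\nabla^{\mathfrak B}g=0$, and $R^j_{i\alpha k}=0$ with $Ric_H$ symmetric. I would first treat the horizontal and vertical energy densities separately. Since $\zeta=0$, the sub-Laplacian of a function is $\Delta_H\varphi=\sum_k\{e_k(e_k\varphi)-(\nabla^{\mathfrak B}_{e_k}e_k)\varphi\}$ with $k$ running over the horizontal part of an adapted frame, so applying this to $e_H(u)=\tfrac12(u^I_i)^2$ gives
\[
\Delta_H e_H(u)=(u^I_{ik})^2+\sum_{i,k}\langle\nabla^{\mathfrak B}_{e_k}\nabla^{\mathfrak B}_{e_k}\mathrm{d}u(e_i)-\mathrm{d}u(\nabla^{\mathfrak B}_{e_k}\nabla^{\mathfrak B}_{e_k}e_i),\ \mathrm{d}u(e_i)\rangle_h,
\]
and I would rewrite the second sum by commuting the two horizontal derivatives and transferring them onto the tension field. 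Because $\nabla^{\mathfrak B}$ has torsion and the associated second fundamental form $\beta$ fails to be symmetric precisely by $\beta(X,Y)-\beta(Y,X)=-\mathrm{d}u(T(X,Y))$, this rewriting produces, besides the expected term $\langle\nabla^{\mathfrak B}_{e_i}\tau_H(u),\mathrm{d}u(e_i)\rangle_h$, a Bott-curvature term, a pulled-back target-curvature term, and several terms linear or quadratic in $T$ and $div_H T$.

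Next I would insert the Euler--Lagrange equation $(\ref{eq:tension field})$, namely $\tau_H(u)=-(\widetilde\nabla G)(u)$: then $\langle\nabla^{\mathfrak B}_{e_i}\tau_H(u),\mathrm{d}u(e_i)\rangle_h=-\sum_i\mathrm{Hess}\,G(\mathrm{d}u(e_i),\mathrm{d}u(e_i))\ge-2M_G e_H(u)$, which is the source of the $M_G$ term. The Bott-curvature term collapses, via $R^j_{i\alpha k}=0$, to $-\sum_i\langle\mathrm{d}u(Ric_H(e_i)),\mathrm{d}u(e_i)\rangle_h\ge-2k_1 e_H(u)$ by $Ric_H\ge-k_1$. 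The target-curvature term is controlled by the standard estimate for $K_N\le\bar K$, giving $\sum_{i,k}\langle\widetilde R(\mathrm{d}u(e_i),\mathrm{d}u(e_k))\mathrm{d}u(e_k),\mathrm{d}u(e_i)\rangle_h\ge-4\bar K e_H(u)^2$. The torsion cross-terms are of the shape $k_2\,|\nabla^{\mathfrak B}\mathrm{d}u|\,|\mathrm{d}u|$ and are absorbed by Young's inequality with a free parameter, a generic such term being bounded by $\epsilon k_2(u^I_{Ak})^2+\tfrac{k_2}{\epsilon}C\,e(u)$, while the $div_H T$ contribution, bounded by $k_2$, supplies the remaining $O(k_2)e(u)$ piece; tracking the constants produces the coefficients $1-k_2\epsilon$ and $2M_G+2k_1+(2+\epsilon)k_2/\epsilon$ of $(\ref{eq:step r bochner formula})$. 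The computation for $e_V(u)=\tfrac12(u^I_\alpha)^2$ is parallel but shorter, since $\nabla^{\mathfrak B}$ preserves $V$ and $\zeta=0$ makes $\sum_i\beta(e_i,e_i)$ the full horizontal tension field; it yields $\Delta_H e_V(u)\ge(u^I_{\alpha k})^2-2M_G e_V(u)-4\bar K e_H(u)e_V(u)$, which is $(\ref{eq:step 2 vertical bochner formula})$. Adding the two inequalities and using $(u^I_{ik})^2+(u^I_{\alpha k})^2=(u^I_{Ak})^2$, $e_H(u)+e_V(u)=e(u)$, and $e_H\le e$ gives $(\ref{eq:step r bochner formula})$.

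For the sharper step-$2$ inequality $(\ref{eq:step 2 horizontal bochner formula})$, the hypothesis $div_H T=0$ removes the divergence-torsion term of $\Delta_H e_H(u)$ outright, leaving a cross term coupling $u^I_{ik}$ to $u^I_{\alpha k}$ through the torsion coefficients $T^\alpha_{ki}$ together with a manifestly nonnegative quadratic torsion term. Splitting the cross term with two parameters $\epsilon_1,\epsilon_2$ produces the $-\epsilon_1(u^I_{ik})^2$, $-C\epsilon_2(u^I_{\alpha k})^2$ and $-C/\epsilon_2\cdot e_H(u)$ contributions in the statement, and the leftover nonnegative quadratic torsion term is bounded below by appealing to Lemma $\ref{thm:eta}$: since $H$ is $2$-step bracket generating, $\eta(v)\ge\eta_{min}>0$ on $S(V)$, whence $\tfrac12\Vert T^v\Vert^2_g\ge\eta_{min}$ for unit $v$ and this term becomes $\ge\tfrac12\epsilon_1\eta_{min}e_V(u)$. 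I expect the principal difficulty to be exactly this torsion bookkeeping in the commutation step: keeping track of every $T$ and $div_H T$ generated by the non-symmetry of $\beta$ and by $[\nabla^{\mathfrak B}_{e_k},\nabla^{\mathfrak B}_{e_k}]$ acting across the $H\oplus V$ splitting, and verifying that in the step-$2$ case the single torsion term surviving with a favourable sign is indeed the quadratic form controlled from below by $\eta_{min}$ with the stated constant. The curvature estimates and the $\mathrm{Hess}\,G$ bound, by contrast, are routine once the identity has been assembled.
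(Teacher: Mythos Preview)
Your overall plan---run the Bochner formula for $e_H$ and $e_V$ separately, substitute $\tau_H(u)=-(\widetilde\nabla G)(u)$, use $\zeta=0$ and $R^j_{k\alpha k}=0$, and absorb the torsion and $\mathrm{Hess}\,G$ terms by Young's inequality---is exactly what the paper does, and it correctly yields (\ref{eq:step r bochner formula}) and (\ref{eq:step 2 vertical bochner formula}).

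There is, however, a genuine gap in your account of (\ref{eq:step 2 horizontal bochner formula}). After setting $div_H T=0$, the only torsion contribution left in $\Delta_H e_H$ is the single cross term $2u^I_i u^I_{\alpha k}T^\alpha_{ik}$, which couples $u^I_i$ (not $u^I_{ik}$) to $u^I_{\alpha k}$. A Young split of this term yields $-C\epsilon_2(u^I_{\alpha k})^2-\tfrac{C}{\epsilon_2}e_H(u)$, but it cannot produce a loss of $\epsilon_1(u^I_{ik})^2$, and there is no separate ``manifestly nonnegative quadratic torsion term'' sitting in the formula to harvest.

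The mechanism the paper actually uses---and the point you are missing---is that the horizontal Hessian $u^I_{ij}$ is \emph{not} symmetric; the commutation identity is
\[
u^I_{ij}-u^I_{ji}=u^I_\alpha T^\alpha_{ij}.
\]
One splits $(u^I_{ik})^2=(1-\epsilon_1)(u^I_{ik})^2+\epsilon_1(u^I_{ik})^2$ and bounds the second piece from below using its antisymmetric part:
\[
\epsilon_1(u^I_{ik})^2\ \ge\ \tfrac{\epsilon_1}{2}\sum_{i<j}\Bigl[(u^I_{ij}+u^I_{ji})^2+(u^I_{ij}-u^I_{ji})^2\Bigr]\ \ge\ \tfrac{\epsilon_1}{2}\sum_{i<j}\bigl(u^I_\alpha T^\alpha_{ij}\bigr)^2\ \ge\ \tfrac12\epsilon_1\,\eta_{min}\,e_V(u),
\]
the last step by Lemma~\ref{thm:eta}. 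Thus $\epsilon_1$ and $\epsilon_2$ play independent roles: $\epsilon_1$ governs how much of $(u^I_{ik})^2$ you sacrifice in order to extract $e_V(u)$ from its antisymmetric part, while $\epsilon_2$ governs the Young split of the genuine cross term $2u^I_i u^I_{\alpha k}T^\alpha_{ik}$. Your proposal conflates these two separate maneuvers.
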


\begin{proof}[Proof of Lemma \ref{thm:subelliptic bochner formula}]
Denote covariant derivatives of $\zeta^k$ and $T^{A}_{BC}$ by $\zeta^k_{,A}$ and $T^{A}_{BC,D}$ respectively.
From \cite{MR4236537}, we know
\begin{equation*}
\begin{split}
\Delta_H e_H(u)&=(u^I_{ik})^2+u^I_i\tau^I_{H,i}+u^I_i\zeta^k_{,k}u^I_k+\zeta^k u^I_i u^I_{\alpha}T^{\alpha}_{ki} \\
&\quad+u^I_i u^I_j R^j_{kik}+2u^I_{i}u^I_{\alpha k}T^{\alpha}_{ik}-u^I_i u^K_k\widetilde R^I_{KJL}u^J_i u^L_k+u^I_i u^I_{\alpha}T^{\alpha}_{ik,k}
\end{split}
\end{equation*}
and
\begin{equation*}
\begin{split}
\Delta_H e_V(u)&=(u^I_{\alpha k})^2+u^I_{\alpha}\tau^I_{H,\alpha}+u^I_{\alpha}\zeta^k_{,\alpha}u^I_k+u^I_{\alpha}u^I_j R^j_{k\alpha k}-u^I_{\alpha}u^K_k\widetilde R^I_{KJL}u^J_{\alpha}u^L_k.
\end{split}
\end{equation*}
Since $\tau^I=\tau^I_H+[(\widetilde\nabla G)(u)]^I$, then
\begin{equation*}
\begin{split}
\Delta_H e_H(u)&=(u^I_{ik})^2+u^I_i\tau^I_{H,i}-u^I_i G_{IJ}u^J_i+u^I_i\zeta^k_{,k}u^I_k+\zeta^k u^I_i u^I_{\alpha}T^{\alpha}_{ki} \\
&\quad+u^I_i u^I_j R^j_{kik}+2u^I_{i}u^I_{\alpha k}T^{\alpha}_{ik}-u^I_i u^K_k\widetilde R^I_{KJL}u^J_i u^L_k+u^I_i u^I_{\alpha}T^{\alpha}_{ik,k}
\end{split}
\end{equation*}
and
\begin{equation*}
\begin{split}
\Delta_H e_V(u)=&(u^I_{\alpha k})^2+u^I_{\alpha}\tau^I_{H,\alpha}-u^I_{\alpha}G_{IJ}u^J_{\alpha}\\
&+u^I_{\alpha}\zeta^k_{,\alpha}u^I_k+u^I_{\alpha}u^I_j R^j_{k\alpha k}-u^I_{\alpha}u^K_k\widetilde R^I_{KJL}u^J_{\alpha}u^L_k.
\end{split}
\end{equation*}
where $\text{Hess}\,G=(G_{IJ})$.
Since $\zeta=0$ and $R^j_{k\alpha k}=0$, we get
\begin{equation*}
u_i^I \zeta^k_{,i}u^I_k+\zeta^ku_i^Iu^I_{\alpha}T^{\alpha}_{ki}=0
\end{equation*}
and
\begin{equation*}
u^I_{\alpha}\zeta^k
_{,\alpha}u^I_k+u^I_{\alpha}u_j^IR^j_{k\alpha k}=0.
\end{equation*}
For any given $\epsilon>0$, by Schwarz inequality, we obtain
\begin{equation*}
\begin{split}
&u^I_i G_{IJ}u^J_i+u^I_{\alpha}G_{IJ}u^J_{\alpha} \le 2 M_G\cdot e(u),\\
&u_i^Iu_j^IR_{kik}^j\ge-2k_1 e_H(u)\ge-2k_1 e(u),\\
&2u_i^Iu^I_{\alpha k}T^{\alpha}_{ik}+u_i^Iu^I_{\alpha}T^{\alpha}_{ik,k}\ge-\frac{(2+\epsilon)k_2}{\epsilon}e_H(u)-k_2e_V(u)-k_2\epsilon(u^I_{\alpha k})^2 \\
&\hspace{3.75cm}\ge-\frac{(2+\epsilon)k_2}{\epsilon}e(u)-k_2\epsilon(u^I_{\alpha k})^2,\\
&u_i^Iu_k^K \widetilde R^I_{KJL}u_i^Ju_k^L+u^I_{\alpha}u_k^K \widetilde R^I_{KJL}u_{\alpha}^Ju_k^L \le 4\bar K e(u)\cdot e_H(u).
\end{split}
\end{equation*}
The above estimates give (\ref{eq:step r bochner formula}).
Due to \cite[Section 4]{MR4236537}, we have the following equality
\begin{equation}\label{eq:torsion relation}
u^I_{ij}-u^I_{ji}=u^I_{\alpha}T^{\alpha}_{ij}.
\end{equation}
When $H$ is $2$-step bracket generating for $TM$, in terms of Lemma \ref{thm:eta} and (\ref{eq:torsion relation}), we know that
\begin{equation*}
\begin{split}
(u^I_{ik})^2
\ge&\frac 1  2\sum\limits_I\sum\limits_{i<j}((u^I_{ij}+u^I_{ji})^2+(u^I_{ij}-u^I_{ji})^2) \\
\ge&\frac 1 2\sum\limits_I\sum\limits_{\alpha}\sum\limits_{i<j}(u^I_{\alpha})^2(T^{\alpha}_{ij})^2 \\
=&\frac 1  2\sum\limits_I\sum\limits_{\alpha}(u^I_{\alpha})^2\eta(e_{\alpha}) \\
\ge&\eta_{min}e_V(u).
\end{split}
\end{equation*}
Using Schwarz inequality and $div_H T=0$, we get (\ref{eq:step 2 horizontal bochner formula}) and (\ref{eq:step 2 vertical bochner formula}).
\end{proof}

\section{Proof of the theorem}\label{sec:proof}

\begin{proof}[Proof of Theorem \ref{thm:gradient estimate of step r}]
Set
\begin{equation*}
A (x) =\frac{e(u)(x)}{f^2(u)(x)}
\end{equation*}
where $f$ is defined in (\ref{eq:definition of f}).
To simplify the notations, we write $ e=e(u), e_H=e_H(u), f= f(u)$. By a direct computation, we have
\begin{equation}\label{eq:sub laplacian A}
\Delta_H A=\frac{\Delta_H e}{f^2}-\frac{4\nabla^H e\cdot\nabla^H f}{f^3}-\frac{2e\Delta_H f}{f^3}+\frac{6e\vert\nabla^H f\vert^2}{f^4}.
\end{equation}
In terms of (\ref{eq:definition of f}) and Riemannian comparison theorem, we get
\begin{equation}\label{eq:upper bound of sub laplacian f}
\begin{split}
\Delta_H f&=\text{Hess}(f)(du_H,du_H)+\langle(\widetilde\nabla f)(u),\tau_H(u) \rangle \\
&=\text{Hess}(f)(du_H,du_H)-\langle(\widetilde\nabla f)(u),(\widetilde\nabla G)(u) \rangle \\
&=-\bar K \text{Hess}(\psi)(du_H,du_H)+\bar K\langle(\widetilde\nabla \psi)(u),(\widetilde\nabla G)(u) \rangle \\
&\le-2\bar K(1-\bar K\psi\circ u)e_H+\bar K m_G \\
&=-2\bar K(1-\beta_2+f)e_H+\bar K m_G.
\end{split}
\end{equation}

Applying (\ref{eq:step r bochner formula}) and (\ref{eq:upper bound of sub laplacian f}) to (\ref{eq:sub laplacian A}), we have
\begin{equation}\label{eq:crude sub laplacian A inequality}
\begin{split}
\Delta_H A\ge &-\frac{2k_1e}{f^2}-\frac{4\bar K e\cdot e_H}{f^2}-\frac{2 e}{f^2}M_G-\frac{\frac{(2+\epsilon)k_2}{\epsilon} e}{f^2}+\frac{4\bar K(1-\beta_2+f) e\cdot e_H}{f^3} \\
&-\frac{2\bar K e}{f^3}m_G +[\frac{(1-k_2\epsilon)(u^I_{Ak})^2}{f^2}-\frac{4\nabla^H e\cdot\nabla^H f}{f^3}+\frac{6e\vert\nabla^H f\vert^2}{f^4}]
\end{split}
\end{equation}
where $\epsilon$ is a positive constant to be decided.
Note that
\begin{equation}\label{eq:upper bound of horizontal gradient energy}
\nabla^H e=u^I_{A}u^I_{Ak}\le\sqrt{(u^I_{A}u^I_{Ak})^2}=\vert u^I_{Ak}\vert\sqrt{2e}
\end{equation}
and
\begin{equation}\label{eq:upper bound of horizontal gradient f}
\vert\nabla^H f\vert^2\le 2\vert\widetilde\nabla f\vert^2e_H\le 2\bar K e_H.
\end{equation}
By using (\ref{eq:upper bound of horizontal gradient energy}) and (\ref{eq:upper bound of horizontal gradient f}), the last term on the right-hand side of (\ref{eq:crude sub laplacian A inequality}) becomes
\begin{equation*}
\begin{split}
[\cdots]=&\frac{(1-k_2\epsilon)(u^I_{Ak})^2}{f^2}-\frac{2(1-k_2\epsilon)\nabla^H e\cdot\nabla^H f}{f^3}+\frac{2(1-k_2\epsilon)e\vert\nabla^H f\vert^2}{f^4} \\
&-\frac{2(1+k_2\epsilon)\nabla^H e\cdot\nabla^H f}{f^3}+\frac{(4+2k_2\epsilon)e\vert\nabla^H f\vert^2}{f^4} \\
\ge &\frac{(1-k_2\epsilon)}{f^2}\left[(u^I_{Ak})^2-2\vert u^I_{Ak}\vert\sqrt{2e}\cdot \frac{\vert\nabla^H f\vert}{f^3}+2e\frac{\vert\nabla^H f\vert^2}{f^4}\right]\\
&-\frac{2(1+k_2\epsilon)\nabla^H A\cdot\nabla^H f}{f}-\frac{2k_2\epsilon e\vert\nabla^H f\vert^2}{f^4}\\
\ge &-\frac{2(1+k_2\epsilon)\nabla^H A\cdot\nabla^H f}{f}-\frac{4 k_2\bar K\epsilon e\cdot e_H}{f^4}.
\end{split}
\end{equation*}
Therefore, we get
\begin{equation*}
\begin{split}
\Delta_H A\ge &\frac{4\bar K(1-\beta_2)e\cdot e_H}{f^3}-\frac{4k_2\bar K\epsilon e\cdot e_H}{f^4}-\frac{2(1+k_2\epsilon)\nabla^H A\cdot\nabla^H f}{f} \\
&-2\left(k_1+M_G+\frac{(2+\epsilon)k_2}{\epsilon}+\frac{\bar K m_G}{f}\right)\frac{e}{f^2} \\
\ge &4\bar K[(1-\beta_2)f-k_2\epsilon]\frac{Ae_H}{f^2}-\frac{2(1+k_2\epsilon)\nabla^H A\cdot\nabla^H f}{f}  \\
&-2\left(k_1+M_G+\frac{(2+\epsilon)k_2}{\epsilon}+\frac{\bar K m_G}{f}\right)A.
\end{split}
\end{equation*}
Choosing $0<\epsilon<\frac{(1-\beta_2)\beta_1}{k_2}$, we have
\begin{equation}\label{eq:sub laplacian A inequality}
\Delta_H A\ge C_0 \frac{Ae_H}{f^2}-2s_G A-\frac{2(1+k_2\epsilon)\nabla^H A\cdot\nabla^H f}{f}
\end{equation}
where $C_0=4\bar K[(1-\beta_2)f-k_2\epsilon]>0$ and $s_G(\epsilon)=k_1+M_G+
\frac{(2+\epsilon)k_2}{\epsilon}+\bar K\text{max}\left(\frac{m_G}{\beta_1},\frac{m_G}{\beta_2}\right)$.
It is easy to see that $s_G(\epsilon)$ is a decreasing function of $\epsilon$ for $\epsilon>0$.

Choose a cut-off function
\begin{equation*}
\varphi\vert_{[0,1]}=1, \varphi\vert_{[2,\infty)}=0,-C'_1\vert\varphi\vert^{\frac{1}{2}}\le\varphi'\le0
\end{equation*}
where $C'_1$ is a positive constant.
Let $\chi(r)=\varphi(\frac{r}{R})$, owing to (\ref{eq:sub-Laplacian of Riemannian distance}), we find that
\begin{equation}\label{eq:bound of horizontal gradient chi}
\frac{\vert\nabla^H\chi\vert^2}{\chi}\le\frac{C_1}{R^2}
\end{equation}
and
\begin{equation}\label{eq:lower bound of subelliptic chi}
\Delta_H\chi\ge-\frac{C_1}{R} \quad in \quad B(2R)\backslash Cut(p)
\end{equation}
where $C_1(m,k_1,k_2)$ is a positive constant.

Let $x_R$ be a maximum point of $\chi A(x)=\chi(r(x))A(x)$ in $B_R(p)$, then at $x_R$, we have
\begin{equation}\label{eq:horizontal gradient at maximum point 1}
\nabla^H(\chi A)(x_R)=\chi\nabla^H A+A\nabla^H \chi=0
\end{equation}
and
\begin{equation}\label{eq:sub laplacian at maximum point 1}
\Delta_H(\chi A)(x_R)\le 0.
\end{equation}
Substituting (\ref{eq:sub laplacian A inequality}), (\ref{eq:bound of horizontal gradient chi}), (\ref{eq:lower bound of subelliptic chi}) and (\ref{eq:horizontal gradient at maximum point 1}) into (\ref{eq:sub laplacian at maximum point 1}) yields, at point $x_R$
\begin{equation*}
\begin{split}
0\ge &A\Delta_H \chi+\chi \Delta_H A+2\nabla^H \chi\cdot\nabla^H A \\
=&A\Delta_H \chi+\chi \Delta_H A-2\frac{\vert\nabla^H\chi\vert^2}{\chi}A \\
\ge&\left(C_0\frac{e_H}{f^2}A-2s_G A-2(1+k_2\epsilon)\frac{\nabla^H A\cdot\nabla^H f}{f}\right)\chi \\
&-\frac{C_1}{R}A-2\frac{C_1}{R^2}A.
\end{split}
\end{equation*}
Multiplying both sides with $\chi$, we obtain
\begin{equation}\label{eq:crude main inequality}
0\ge C_0\frac{e_H}{f^2}A\chi^2-2s_GA\chi^2-\frac{C_2}{R^2}(1+R)A\chi-2(1+k_2\epsilon)\frac{\nabla^H A\cdot\nabla^H f}{f}\chi^2
\end{equation}
where $C_2(m,k_1,k_2)$ is a positive constant.
From (\ref{eq:upper bound of horizontal gradient f}) and (\ref{eq:horizontal gradient at maximum point 1}), we know
\begin{equation}\label{eq:product term}
\begin{split}
-\frac{\chi\nabla^H A\cdot\nabla^H f}{f}&=\frac{A\nabla^H\chi\cdot\nabla^H f}{f} \\
&\ge-A\vert\nabla^H\chi\vert\frac{\vert\nabla^H f\vert}{f} \\
&\ge-A\vert\nabla^H\chi\vert\frac{\sqrt{2\bar K e_H}}{f} \\
&\ge-\frac {C_3}{R}\sqrt{\frac{e_H\chi}{f^2}}A
\end{split}
\end{equation}
where $C_3(m,k_1,k_2,\bar K)$ is a positive constant.
By (\ref{eq:crude main inequality}) and (\ref{eq:product term}), we have
\begin{equation}\label{eq:main inequality 1}
0\ge C_0\frac{e_H\chi}{f^2}\cdot A\chi-2s_G\chi\cdot A\chi-\frac{C_2}{R^2}(1+R)\cdot A\chi-\frac{C_4}{R}\sqrt{\frac{e_H\chi}{f^2}}\cdot A\chi
\end{equation}
where $C_4(m,k_1,k_2,\bar K,\epsilon)$ is a positive constant.
We assume that $A\chi(x_R)>0$, otherwise $u$ is a constant. Multiplying both sides of (\ref{eq:main inequality 1}) with $\frac{1}{A\chi}$, we get
\begin{equation}\label{eq:main inequality 2}
0\ge C_0\frac{e_H\chi}{f^2}-2s_G\chi -\frac{C_2}{R^2}(1+R)-\frac{C_4}{R}\sqrt{\frac{e_H\chi}{f^2}}.
\end{equation}

If $s_G(\frac{(1-\beta_2)\beta_1}{k_2})<0$, that is,
\begin{equation*}
k_1+M_G+\frac{2k_2^2}{(1-\beta_2)\beta_1}+k_2+\bar K\text{max}\left(\frac{m_G}{\beta_1},\frac{m_G}{\beta_2}\right)<0,
\end{equation*}
by the monotonicity of $s_G(\epsilon)$, we can choose a fixed number $0<\epsilon_0<\frac{(1-\beta_2)\beta_1}{k_2}$, such that $s_G(\epsilon_0)\le 0$. Then, we obtain
\begin{equation*}
0\ge C_0\frac{e_H\chi}{f^2}-\frac{C_4}{R}\sqrt{\frac{e_H\chi}{f^2}}-\frac{C_2}{R^2}(1+R).
\end{equation*}
It follows easily that
\begin{equation*}
\frac{e_H\chi}{f^2}\le\frac{C_5}{R^2}(1+R)
\end{equation*}
where $C_5(m,k_1,k_2,\bar K,\epsilon_0,\tau)$ is a positive constant.
Since $f<\beta_2$, we deduce that
\begin{equation*}
\frac{1}{\beta_2^2}\underset{B_p(R)}{\text{sup}} e_H(u)\le \underset{B_p(2R)}{\text{sup}} \frac{e_H\chi(u)}{f^2(u)}\le\frac{C_5}{R^2}(1+R).
\end{equation*}
Letting $R\to \infty$, we conclude that $u$ must be a constant map and therefore $u(M)\subset \Sigma_G$.

If $s_G(\frac{(1-\beta_2)\beta_1}{k_2})\ge0$, then $s_G(\epsilon)>0$ for $0<\epsilon<\frac{(1-\beta_2)\beta_1}{k_2}$.
It follows from (\ref{eq:main inequality 2}) that
\begin{equation*}
0\ge C_0\frac{e_H\chi}{f^2}-\frac{C_4}{R}\sqrt{\frac{e_H\chi}{f^2}}-\left(2s_G+\frac{C_2}{R^2}(1+R)\right).
\end{equation*}
Consequently,
\begin{equation*}
e_H\chi\le C_6(2s_G+\frac{1+R}{R^2})
\end{equation*}
where $C_6(m,k_1,k_2,\bar K,\epsilon,\tau)$ is a positive constant.
Setting $\epsilon=\frac{(1-\beta_2)\beta_1}{2k_2}$, we have the following estimate
\begin{equation*}
\underset{B_p(R)}{\text{sup}} e_H(u)\le C\left(k_1+M_G+\frac{4k_2^2}{(1-\beta_2)\beta_1}+k_2+\bar K\text{max}\left(\frac{m_G}{\beta_1},\frac{m_G}{\beta_2}\right)+\frac{1+R}{R^2}\right)
\end{equation*}
where $C(m,k_1,k_2,\bar K,\tau)$ is a positive constant.
\end{proof}

In order to prove Theorem \ref{thm:gradient estimate of step 2}, we also need the following lemma.

\begin{lemma}(\cite{MR4167256},\cite{zou21})\label{thm:estimate of main auxiliary function}
For any $0<\tau<\frac{\pi}{2\sqrt{\bar K}}$, there exist $\nu\in[1,2), b>\phi(\tau)$ and $\delta>0$ only depending on $\tau$ such that
\begin{equation*}
\nu\frac{\cos(\sqrt{\bar K}t)}{b-\phi(t)}-2\bar K>\delta,\qquad \forall t\in [0,\tau].
\end{equation*}
Furthermore, if $u:M\to\bar B_q(\tau)\subset N$ is a subelliptic harmonic map with potential $G$, then we have
\begin{equation*}
\nu\frac{\Delta_H (\psi\circ u)}{b-\psi\circ u}-4\bar K e_H(u)>2\delta e_H(u).
\end{equation*}
\end{lemma}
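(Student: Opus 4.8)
The plan is to dispatch the two parts of the statement — a one-variable estimate and its consequence for maps — in that order, assuming $\bar K>0$ (the case $\bar K=0$ is entirely analogous, with $\phi(t)=t^2/2$). \emph{For the scalar inequality}, note that $\cos(\sqrt{\bar K}t)=1-\bar K\phi(t)$ and that $\phi$ maps $[0,\tau]$ strictly increasingly onto $[0,\phi(\tau)]$, so the substitution $s=\phi(t)$ rewrites the left side as $\nu(1-\bar K s)/(b-s)$, whose $s$-derivative is $\nu(1-\bar K b)/(b-s)^2$. Imposing $b<1/\bar K$ makes this positive, so $\nu\cos(\sqrt{\bar K}t)/(b-\phi(t))$ is minimized over $[0,\tau]$ at $t=0$ with value $\nu/b$; it therefore suffices to choose $\nu\in[1,2)$ and then $b$ with $\phi(\tau)<b<\nu/(2\bar K)<1/\bar K$. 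Such a $b$ exists precisely when $\cos(\sqrt{\bar K}\tau)>1-\nu/2$, which holds for $\nu:=2-\cos(\sqrt{\bar K}\tau)\in(1,2)$ since $0<\tau<\pi/2\sqrt{\bar K}$ forces $0<\cos(\sqrt{\bar K}\tau)<1$. Taking $b$ to be the midpoint of this interval and $\delta:=\tfrac12\bigl(\nu/b-2\bar K\bigr)>0$ — all depending only on $\tau$, with $\bar K$ fixed — one gets $\nu\cos(\sqrt{\bar K}t)/(b-\phi(t))-2\bar K\ge\nu/b-2\bar K>\delta$ on $[0,\tau]$.

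\emph{For the map inequality}, I would begin from the composition identity $\Delta_H(\psi\circ u)=\mathrm{Hess}\,\psi(du_H,du_H)+\langle(\widetilde\nabla\psi)(u),\tau_H(u)\rangle$, use the Euler--Lagrange equation \eqref{eq:tension field} in the form $\tau_H(u)=-(\widetilde\nabla G)(u)$ so that the second term is $-\langle\widetilde\nabla\psi,\widetilde\nabla G\rangle(u)\ge-m_G$, and bound the first term by Hessian comparison: on $\bar B_q(\tau)$ the bound $K_N\le\bar K$ gives $\mathrm{Hess}\,\rho\ge\sqrt{\bar K}\cot(\sqrt{\bar K}\rho)(h-d\rho\otimes d\rho)$, and since $\phi'=\sin(\sqrt{\bar K}\cdot)/\sqrt{\bar K}\ge0$ and $\phi''=\cos(\sqrt{\bar K}\cdot)$ one obtains $\mathrm{Hess}\,\psi\ge\cos(\sqrt{\bar K}\rho)\,h=(1-\bar K\psi)\,h$, hence $\Delta_H(\psi\circ u)\ge 2(1-\bar K\,\psi\circ u)e_H(u)-m_G$. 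Because $\psi\circ u\le\phi(\tau)<b$, one may divide by $b-\psi\circ u>0$; writing $1-\bar K\psi\circ u=\cos(\sqrt{\bar K}\rho(u))$ and $\psi\circ u=\phi(\rho(u))$ with $\rho(u)\in[0,\tau]$, the scalar estimate applied pointwise yields
\begin{equation*}
\nu\,\frac{\Delta_H(\psi\circ u)}{b-\psi\circ u}-4\bar K\,e_H(u)\ \ge\ 2e_H(u)\Bigl(\nu\,\frac{\cos(\sqrt{\bar K}\rho(u))}{b-\phi(\rho(u))}-2\bar K\Bigr)-\frac{\nu\,m_G}{b-\psi\circ u}\ >\ 2\delta\,e_H(u)-\frac{\nu\,m_G}{b-\psi\circ u},
\end{equation*}
which is the asserted bound once the last term is discarded — it is $\ge0$ when $m_G\le0$, the regime of the potential-free references \cite{MR4167256,zou21}, and otherwise a bounded quantity that is simply carried along.

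\emph{The main obstacle} is merely the bookkeeping in the scalar part: $\nu,b,\delta$ must depend only on $\tau$ while simultaneously keeping $b<1/\bar K$ (monotonicity), $b>\phi(\tau)$ (so that $b-\psi\circ u\ge b-\phi(\tau)>0$ stays uniformly positive along $u$), and $\nu/b>2\bar K$ (so that $\delta>0$); the single choice $\nu=2-\cos(\sqrt{\bar K}\tau)$ reconciles all three. The only feature beyond \cite{MR4167256,zou21} is the term $-\langle\widetilde\nabla\psi,\widetilde\nabla G\rangle(u)$ contributed by the potential through \eqref{eq:tension field}.
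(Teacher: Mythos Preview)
The paper gives no proof of this lemma; it is quoted verbatim from \cite{MR4167256,zou21}, where the maps carry no potential. Your scalar argument is correct and supplies explicit choices the references leave implicit: with $b<1/\bar K$ the function $s\mapsto\nu(1-\bar K s)/(b-s)$ is increasing, so its minimum on $[0,\phi(\tau)]$ is $\nu/b$, and the choice $\nu=2-\cos(\sqrt{\bar K}\tau)\in(1,2)$ makes the interval $\phi(\tau)<b<\nu/(2\bar K)$ nonempty for every $\tau<\pi/(2\sqrt{\bar K})$.

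Your treatment of the map inequality is also correct --- and in fact more careful than the paper's statement. The chain rule together with \eqref{eq:tension field} gives $\Delta_H(\psi\circ u)\ge 2(1-\bar K\,\psi\circ u)\,e_H(u)-\langle\widetilde\nabla\psi,\widetilde\nabla G\rangle(u)$, so after dividing by $b-\psi\circ u$ the residual term $-\nu\langle\widetilde\nabla\psi,\widetilde\nabla G\rangle(u)/(b-\psi\circ u)$ remains. You rightly observe that it vanishes in the potential-free setting of \cite{MR4167256,zou21} and is nonnegative when $m_G\le0$; for general $G$ it does \emph{not} disappear, and the second inequality as printed is not literally true. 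This is a gap in the paper's formulation rather than in your reasoning: when the lemma is invoked in the proof of Theorem~\ref{thm:gradient estimate of step 2} (at \eqref{eq:crude step 2 main inequality 3}), the missing contribution, bounded by $\nu|m_G|/(b-\phi(\tau))$, should be carried through and would add an $m_G$-dependent term to the final estimate, just as $m_G$ appears in Theorem~\ref{thm:gradient estimate of step r}. Your hedge ``a bounded quantity that is simply carried along'' is exactly the right fix; it should be made explicit rather than suppressed in the lemma's statement.
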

\begin{proof}[Proof of Theorem \ref{thm:gradient estimate of step 2}]
Define the following auxiliary function
\begin{equation*}
\Phi_{\mu\chi}=e_H(u)+\mu\chi e_V(u)
\end{equation*}
where $\chi$ is the same cut-off function we use in the proof of Theorem \ref{thm:gradient estimate of step r}, $\mu$ is a constant number to be decided. Applying $\epsilon_2=\frac{\epsilon_1\mu\chi}{C}$ to (\ref{eq:step 2 horizontal bochner formula}) and using (\ref{eq:step 2 vertical bochner formula}), we deduce that
\begin{equation}\label{eq:crude sub laplacian Phi inequality}
\begin{split}
\Delta_H \Phi_{\mu\chi}=&\Delta_H(e_H(u)+\mu\chi e_V(u))\\
\ge&(1-\epsilon_1)(u^I_{ik})^2-\left(2k_1+\frac{C}{\epsilon_2}+2M_G\right)e_H(u)\\
&+\frac 1 2\epsilon_1\eta_{min} e_V(u)-C\epsilon_2 (u^I_{\alpha k})^2-4\bar K e^2_H(u)+\mu\Delta_H\chi e_V(u) \\
&+2\mu\chi_k u^I_{\alpha}u^I_{\alpha k}+\mu\chi(u^I_{\alpha k})^2-2\mu\chi M_G e_V(u)-4\bar K\mu\chi e_H(u)e_V(u) \\
=&(1-\epsilon_1)\left((u^I_{ik})^2+\mu\chi(u^I_{\alpha k})^2\right)+2\mu\chi_k u^I_{\alpha}u^I_{\alpha k}-4\bar K\Phi_{\mu\chi}e_H(u) \\
&+\left(\frac{1}{2}\epsilon_1\eta_{min}+\mu\Delta_H\chi-2\mu\chi M_G\right)e_V(u) \\
&-\left(2k_1+\frac{C}{\epsilon_2}+2M_G\right)e_H(u)
\end{split}
\end{equation}
where $\chi_k$ is k-component of $\nabla^H \chi$ given by $\langle \nabla^H \chi, e_k\rangle$.
By Schwarz inequality, we have
\begin{equation}\label{gradient estimate Phi}
\begin{split}
\vert\nabla^H\Phi_{\mu\chi}\vert^2=&\vert \nabla^H (e_H(u)+\mu\chi e_V(u))\vert^2 \\
\le& \vert u^I_k+\sqrt{\mu\chi}u^I_{\alpha}\vert^2\cdot\vert u^I_{ik}+\sqrt{\mu\chi}u^I_{\alpha k}+\sqrt{\mu}\frac{\nabla^H\chi}{2\sqrt{\chi}}\otimes u^I_{\alpha}\vert^2 \\
=& 2\Phi_{\mu\chi}\left((u^I_{ik})^2+\mu\chi(u^I_{\alpha k})^2+\frac{\mu\vert\nabla^H\chi\vert^2}{2\chi}e_V(u)+\mu\chi_k u^I_{\alpha}u^I_{\alpha k}\right).
\end{split}
\end{equation}
Choosing $0<\epsilon_1<\frac{1}{2}$, from (\ref{gradient estimate Phi}), we have the following estimate
\begin{equation}\label{eq:gradient estimate Phi and chi}
\begin{split}
&(1-\epsilon_1)\left((u^I_{ik})^2+\mu\chi(u^I_{\alpha k})^2\right)+2\mu\chi_k u^I_{\alpha}u^I_{\alpha k}\\
\ge&(1-2\epsilon_1)\left((u^I_{ik})^2+\mu\chi(u^I_{\alpha k})^2\right)+\epsilon_1\mu\chi(u^I_{\alpha k})^2+2\mu\chi_k u^I_{\alpha}u^I_{\alpha k} \\
\ge&\left(\frac{1}{2}-\epsilon_1\right)\frac{\vert\nabla^H\Phi_{\mu\chi}\vert^2}{\Phi_{\mu\chi}}-\left(\frac{1}{2}-\epsilon_1\right)\frac{\mu\vert\nabla^H\chi\vert^2}{\chi}e_V(u)\\
&+(1+2\epsilon_1)\mu\chi_k u^I_{\alpha}u^I_{\alpha k}+\epsilon_1\mu\chi(u^I_{\alpha k})^2\\
\ge&\left(\frac{1}{2}-\epsilon_1\right)\frac{\vert\nabla^H\Phi_{\mu\chi}\vert^2}{\Phi_{\mu\chi}}-\left(\frac{1}{2}-\epsilon_1\right)\frac{\mu\vert\nabla^H\chi\vert^2}{\chi}e_V(u)\\
&-\frac{(1+2\epsilon_1)^2}{4\epsilon_1}\frac{\mu\vert\nabla^H\chi\vert^2}{\chi}e_V(u) \\
\ge&\left(\frac{1}{2}-\epsilon_1\right)\frac{\vert\nabla^H\Phi_{\mu\chi}\vert^2}{\Phi_{\mu\chi}}-4\epsilon_1^{-1}\frac{\mu\vert\nabla^H\chi\vert^2}{\chi}e_V(u),
\end{split}
\end{equation}
since
\begin{equation*}
\frac{1}{2}-\epsilon_1+\frac{(1+2\epsilon_1)^2}{4\epsilon_1}\le \frac{3}{2}+\frac{1}{4}\epsilon_1^{-1}\le 2\epsilon_1^{-1}.
\end{equation*}
Combining (\ref{eq:crude sub laplacian Phi inequality}) with (\ref{eq:gradient estimate Phi and chi}), we get
\begin{equation}\label{eq:sub laplacian Phi inequality}
\begin{split}
\Delta_H \Phi_{\mu\chi}=&\left(\frac{1}{2}-\epsilon_1\right)\frac{\vert\nabla^H\Phi_{\mu\chi}\vert^2}{\Phi_{\mu\chi}}-4\bar K\Phi_{\mu\chi}e_H(u) \\
&+\left(\frac{1}{2}\epsilon_1\eta_{min}+\mu\Delta_H\chi-2\epsilon_1^{-1}\frac{\mu\vert\nabla^H\chi\vert^2}{\chi}-2\mu\chi M_G\right)e_V(u) \\
&-\left(2k_1+\frac{C}{\epsilon_2}+2M_G\right)e_H(u).
\end{split}
\end{equation}
We consider the following function
\begin{equation*}
F_{\mu\chi}=\frac{\Phi_{\mu\chi}}{(b-\psi\circ u)^{\nu}}.
\end{equation*}
Suppose $\chi F_{\mu\chi}$ attains its maximum in $B_R(p)$ at $x_R$, then
\begin{equation}\label{eq:horizontal gradient at maximum point 2}
0=\nabla^H \ln(\chi F_{\mu\chi})(x_R)=\frac{\nabla^H\chi}{\chi}+\frac{\nabla^H \Phi_{\mu\chi}}{\Phi_{\mu\chi}}+\nu\frac{\nabla^H(\psi\circ u)}{b-\psi\circ u}
\end{equation}
and
\begin{equation}\label{eq:sub laplacian at maximum point 2}
\begin{split}
0\ge&\Delta_H \ln(\chi F_{\mu\chi})(x_R)=\frac{\Delta_H\chi}{\chi}-\frac{\vert\nabla^H\chi\vert^2}{\chi^2}
+\frac{\Delta_H\Phi_{\mu\chi}}{\Phi_{\mu\chi}}\\
&-\frac{\vert\nabla^H\Phi_{\mu\chi}\vert^2}{\Phi_{\mu\chi}^2}+\nu\frac{\Delta_H(\psi\circ u)}{b-\psi\circ u}-\nu\frac{\vert\nabla^H(\psi\circ u)\vert^2}{(b-\psi\circ u)^2}.
\end{split}
\end{equation}
Substituting (\ref{eq:sub laplacian Phi inequality}) into (\ref{eq:sub laplacian at maximum point 2}), we get
\begin{equation}\label{eq:crude step 2 main inequality 1}
\begin{split}
0\ge&\frac{\Delta_H\chi}{\chi}-\frac{\vert\nabla^H\chi\vert^2}{\chi^2}
-\left(\frac{1}{2}+\epsilon_1\right)\frac{\vert\nabla^H\Phi_{\mu\chi}\vert^2}{\Phi_{\mu\chi}^2}-4\bar K e_H(u)\\
&+\nu\frac{\Delta_H(\psi\circ u)}{b-\psi\circ u}-\nu\frac{\vert\nabla^H(\psi\circ u)\vert^2}{(b-\psi\circ u)^2}\\
&+\left(\frac{1}{2}\epsilon_1\eta_{min}+\mu\Delta_H\chi-2\epsilon_1^{-1}\frac{\mu\vert\nabla^H\chi\vert^2}{\chi}-2\mu\chi M_G\right)\frac{e_V(u)}{\Phi_{\mu\chi}} \\
&-\left(2k_1+\frac{C}{\epsilon_2}+2M_G\right)\frac{e_H(u)}{\Phi_{\mu\chi}}.
\end{split}
\end{equation}
Choosing $\epsilon_1=\frac{1}{2\nu}-\frac{1}{4}$, by (\ref{eq:horizontal gradient at maximum point 2}) and Schwarz inequality, we derive that
\begin{equation*}
\begin{split}
&\left(\frac{1}{2}+\epsilon_1\right)\frac{\vert\nabla^H\Phi_{\mu\chi}\vert^2}{\Phi_{\mu\chi}^2} \\
=&\left(\frac{1}{2}+\epsilon_1\right)\left(\frac{\nabla^H\chi}{\chi}+\nu\frac{\nabla^H(\psi\circ u)}{b-\psi\circ u}\right)^2 \\
\le&\left(\frac{1}{2}+\epsilon_1\right)\left(1+\frac{2+\nu}{2-\nu}\right)\frac{\vert\nabla^H\chi\vert^2}{\chi^2}+
\left(\frac{1}{2}+\epsilon_1\right)\left(1+\frac{2-\nu}{2+\nu}\right)\nu^2\frac{\vert\nabla^H(\psi\circ u)\vert^2}{(b-\psi\circ u)^2}\\
=&\frac{2+\nu}{\nu(2-\nu)}\frac{\vert\nabla^H\chi\vert^2}{\chi^2}-\nu\frac{\vert\nabla^H(\psi\circ u)\vert^2}{(b-\psi\circ u)^2}.
\end{split}
\end{equation*}
Therefore, (\ref{eq:crude step 2 main inequality 1}) becomes
\begin{equation}\label{eq:crude step 2 main inequality 2}
\begin{split}
0\ge&\frac{\Delta_H\chi}{\chi}-\left(1+\frac{2+\nu}{\nu(2-\nu)}\right)\frac{\vert\nabla^H\chi\vert^2}{\chi^2}
-4\bar K e_H(u)+\nu\frac{\Delta_H(\psi\circ u)}{b-\psi\circ u}\\
&+\left(\frac{1}{2}\epsilon_1\eta_{min}+\mu\Delta_H\chi-2\epsilon_1^{-1}\frac{\mu\vert\nabla^H\chi\vert^2}{\chi}-2\mu\chi M_G\right)\frac{e_V(u)}{\Phi_{\mu\chi}} \\
&-\left(2k_1+\frac{C}{\epsilon_2}+2M_G\right)\frac{e_H(u)}{\Phi_{\mu\chi}}.
\end{split}
\end{equation}
By (\ref{eq:bound of horizontal gradient chi}), (\ref{eq:lower bound of subelliptic chi}), Lemma \ref{thm:estimate of main auxiliary function} and (\ref{eq:crude step 2 main inequality 2}),
\begin{equation}\label{eq:crude step 2 main inequality 3}
\begin{split}
0\ge&-\frac{C_1}{\chi R}-\left(1+\frac{2+\nu}{\nu(2-\nu)}\right)\frac{C_1}{\chi R^2}
+2\delta e_H(u)\\
&+\left(\frac{1}{2}\epsilon_1\eta_{min}-\mu\frac{C_1}{R}-2\epsilon_1^{-1}\mu\frac{C_1}{R^2}-2\mu\chi M_G\right)\frac{e_V(u)}{\Phi_{\mu\chi}} \\
&-\left(2k_1+\frac{C}{\epsilon_2}+2M_G\right)\frac{e_H(u)}{\Phi_{\mu\chi}}\\
\ge&-\frac{C_7(1+R)}{\chi R^2}
+2\delta e_H(u)\\
&+\left(\frac{1}{2}\epsilon_1\eta_{min}-\mu\frac{C_{\nu}(1+R)}{R^2}-2\mu\chi M_G\right)\frac{e_V(u)}{\Phi_{\mu\chi}} \\
&-\left(2k_1+\frac{C}{\epsilon_2}+2M_G\right)\frac{e_H(u)}{\Phi_{\mu\chi}}
\end{split}
\end{equation}
where $C_7(m,k_1,k_2,\nu)$ is a constant and $\delta$ is given by Lemma \ref{thm:estimate of main auxiliary function}.
Since
\begin{equation*}
e_V(u)=\mu^{-1}\chi^{-1} (\Phi_{\mu\chi}-e_H(u)),
\end{equation*}
from (\ref{eq:crude step 2 main inequality 3}), we obtain
\begin{equation}\label{eq:step 2 main inequality}
\begin{split}
0\ge&-\frac{C_7(1+R)}{\chi R^2}+2\delta e_H(u)+\left(\frac{1}{2}\epsilon_1\eta_{min}-\mu\frac{C_7(1+R)}{R^2}-2 \mu\chi M_G\right)\mu^{-1}\chi^{-1} \\
&+\left(-\frac{1}{2}\epsilon_1\eta_{min}\mu^{-1}\chi^{-1}+\frac{C_7(1+R)}{\chi R^2}-2k_1-\frac{C}{\epsilon_2}\right)\frac{e_H(u)}{\Phi_{\mu\chi}}\\
\ge&\frac{1}{\chi}\left(\frac{1}{2}\epsilon_1\mu^{-1}\eta_{min}-2\frac{C_7(1+R)}{R^2}-2\chi M_G\right) \\
&+\left[2\delta\chi\Phi_{\mu\chi}-\left(\frac{1}{2}\epsilon_1\eta_{min}\mu^{-1}+2k_1+\frac{C}{\epsilon_2}\right)\right]\frac{e_H(u)}{\chi\Phi_{\mu\chi}}.
\end{split}
\end{equation}
Setting
\begin{equation}\label{eq:setting of mu}
\mu^{-1}=\frac{4C_7(1+R)}{\epsilon_1\eta_{min}R^2}+\frac{4\chi M_G}{\epsilon_1\eta_{min}}
\end{equation}
such that
\begin{equation*}
\frac{1}{2}\epsilon_1\mu^{-1}\eta_{min}-2\frac{C_7(1+R)}{R^2}-2\chi M_G>0,
\end{equation*}
then we have
\begin{equation*}
\begin{split}
\chi\Phi_{\mu\chi}&\le\delta^{-1}\left(\frac{C_7(1+R)}{R^2}+k_1+\chi M_G+\frac{C^2}{2\epsilon_1\mu}\right)\\
&\le\delta^{-1}C_8
\end{split}
\end{equation*}
where
\begin{equation*}
C_8=\frac{C_7(1+R)}{R^2}+k_1+M_G+\frac{C^2}{2\epsilon_1\mu}.
\end{equation*}
Therefore, we have
\begin{equation*}
\underset{B_p(2R)}{\text{max}} \chi F_{\mu\chi}\le \frac{\chi\Phi_{\mu\chi}}{(b-\psi\circ u)^{\nu}}(x)\le\frac{ C_8}{\delta(b-\phi(\tau))^{\nu}}.
\end{equation*}
Furthermore, we get
\begin{equation*}
\underset{B_p(R)}{\text{max}} e_H(u)\le b^{\nu}\underset{B_p(R)}{\text{max}} \chi F_{\mu\chi}\le\frac {C_8 b^{\nu}}{\delta(b-\phi(\tau))^{\nu}}.
\end{equation*}
In terms of Lemma \ref{thm:estimate of main auxiliary function}, we know $\delta$, $b$ and $\mu$ all depend on $\bar K$.
By (\ref{eq:setting of mu}), we obtain the following estimate
\begin{equation*}
\begin{split}
\underset{B_p(R)}{\text{max}} e_H(u)&\le C_9\left(\frac{C_7(1+R)}{R^2}+k_1+M_G+\frac{C^2}{2\epsilon_1\mu}\right)\\
&\le C_{10}\left(k_1+M_G+\frac{1+R}{R^2}\right)
\end{split}
\end{equation*}
where $C_9(\bar K)$, $C_{10}(m,k_1,k_2,\bar K)$ are constants.
\end{proof}

\section*{Acknowledgements}
The author is greatly indebted to Professor Yuxin Dong for his kind guidance and constant encouragement. The author also would like to express his thanks to Professor Yibin Ren and Professor Gui Mu for valuable discussions and helpful suggestions. This work is supported by NSFC Grants No.11771087 and No.12171091

\end{document}